\providecommand{\U}[1]{\protect\rule{.1in}{.1in}}
\newtheorem{theorem}{Theorem}[section]
\theoremstyle{plain}
\newtheorem{corollary}[theorem]{Corollary}
\newtheorem{definition}[theorem]{Definition}
\newtheorem{example}[theorem]{Example}
\newtheorem{lemma}[theorem]{Lemma}
\newtheorem{notation}[theorem]{Notation}
\newtheorem{proposition}[theorem]{Proposition}
\newtheorem{remark}[theorem]{Remark}
\numberwithin{equation}{section}
\begin{document}

\title{Topologies on unparameterised path space}

\author{Thomas Cass$^{\ast 1}$ and William F. Turner$^{\dagger 1}$}
\thanks{$^{\ast}$The work of Thomas Cass was supported by the EPSRC Programme Grant``DataSig" EP/S026347/1}
\thanks{$^{\dagger}$ The work of William F. Turner was supported by the EPSRC Centre for Doctoral Training in Mathematics of Random Systems: Analysis, Modelling and Simulation (EP/S023925/1)}
\address{$^1$Department of Mathematics, Imperial College London}
\date{\today}
\begin{abstract}
The signature of a path, introduced by K.T. Chen \cite{Chen1} in $1954$, has been extensively studied in recent years. The fundamental $2010$ paper \cite{HL} of Hambly and Lyons showed that the signature is an injective function on the space of continuous, finite-variation paths up to a general notion of reparameterisation called tree-like equivalence. More recently, the approximation theory of the signature has been widely used in the literature in applications. The archetypal instance of these results, see e.g. \cite{LLN}, guarantees uniform approximation on compact sets of a continuous function by a linear functional of the signature. 

In this paper we study in detail, and for the first time, the properties of three natural candidate topologies on the set of unparameterised paths, i.e. the tree-like equivalence classes. These are obtained by privileging different properties of the signature and are: (1) the product topology, obtained by equipping the range of the signature with the (subspace topology of the) product topology in the tensor algebra and then requiring $S$ to be an embedding, (2) the quotient topology derived from the 1-variation topology on the underlying path space, and (3) the metric topology associated to $d\left( \left[ \gamma\right] ,\left[ \sigma\right] \right) := \left\vert \left\vert \gamma^{\ast}-\sigma^{\ast}\right\vert
\right\vert_{1}$ using the (constant-speed) tree-reduced representatives $\gamma^{\ast}$ and $\sigma^{\ast}$ of the respective equivalence classes. We prove that the respective collections of open sets are ordered by strict inclusion, (1) being the weakest and (3) the strongest. Our other conclusions can be summarised as follows. All three topological spaces are separable and Hausdorff, (1) being both metrisable and $\sigma$-compact, but not a Baire space and hence being neither Polish nor locally compact. The completion of (1), in any metric inducing the product topology, is the subspace $G^{*}$ of group-like elements. The quotient topology (2) is \emph{not} metrisable and the metric $d$ is \emph{not} complete. We also discuss some open problems related to these spaces.

We consider finally the implications of the selection of the topology for uniform approximation results involving the signature. A stereotypical model for a continuous function on (unparameterised) path space is the solution of a controlled differential equation. We thus prove, for a broad class of these equations, well-definedness and measurability of the (fixed-time) solution map with respect to the Borel sigma-algebra of each topology. Under stronger regularity assumptions, we further show continuity of this same map on explicit compact subsets of the product topology (1). We relate these results to the expected signature model of \cite{LLN}.
\end{abstract}
\maketitle

\section{Introduction}
A continuous bounded variation path $\gamma:[0,1] \mapsto V$ taking values in a finite-dimensional vector space $V$ has associated to it a sequence of tensors in the product space $\prod_{i=0}^{\infty} V^{\otimes i}$ obtained by taking iterated integrals of successively higher degree. The resulting sequence, its signature $S(\gamma)$, was first studied by K.T. Chen \cite{Chen2, Chen1} and has,  in more recent work \cite{HL}, been shown to characterise $\gamma$ up to an equivalence relation on the space of continuous bounded variation paths called tree-like equivalence.  The signature map $S$ accordingly becomes well-defined and injective on the $\sim_{\tau}$-equivalence classes: the space of so-called unparameterised paths $\mathcal{C}_1$. 

The significance of this body of theory has been given additional impetus by recent applications, see e.g. \cite{LLN}, \cite{Handwriting} in which the invariance of the signature under $\sim_{\tau}$ provides a form of dimensional reduction. These applications benefit from wider properties of the signature, particularly the fact that the collection of monomials on the range $\mathcal{S}$ of the signature map coincides with the restriction to $\mathcal{S}$ of linear functionals on the tensor algebra, see \cite{LLN}. This allows an approximation theory using the signature to be developed, the most basic form of which provides that any continuous function $f$ on a (locally) compact subset $K$ of $\mathcal{C}_1$ can be uniformly approximated by (the restriction of) a linear functional on the tensor algebra. The choice of topology on the space of unparameterised paths, necessary to a complete understanding of this method, is often left unspecified in applications or is otherwise chosen to suit the application at hand. An instance of this is the paper \cite{CO}, where the authors prove universality and characteristicness for kernels derived from the signature, and in doing so they impose the quotient topology derived from the variation topology on the underlying space of parameterised paths.

The purpose of this paper is to broaden this discussion by evaluating and comparing the properties of three topologies which are arrived at by leveraging different properties of $\mathcal{C}_{1}$. The qualities we emphasise are:
\begin{enumerate}
        
    \item The injectivity of the signature map from $\mathcal{C}_{1}$ onto a subset of the product space $\prod_{i=0}^{\infty} V^{\otimes i}$. This allows one to equip the range of the signature map with a (subspace) topology, and hence to transfer this topology onto $\mathcal{C}_{1}$ by requiring the signature map to be an embedding. We refer to this as the product topology.  
    \item The fact that tree-like equivalence on the space of continuous bounded variation paths is an equivalence relation means that $\mathcal{C}_{1}$ can be endowed with the quotient topology derived from the 1-variation topology as in the reference \cite{CO} referred to above.
    \item The existence, for each equivalence class $[\gamma]$ in $\mathcal{C}_{1}$, of a so-called tree-reduced representative \cite{HL}, which is characterised by having minimal length. By considering the constant speed parameterised version of this representative $\gamma^{\ast}$, we can define the metric topology on $\mathcal{C}_{1}$ induced by
\begin{equation*}
d\left( \left[ \gamma\right] ,\left[ \sigma\right] \right) := \left\vert \left\vert \gamma^{\ast}-\sigma^{\ast}\right\vert
\right\vert_{1}. 
\end{equation*}
\end{enumerate}
The product and quotient topologies can be seen as opposite extremes for how one can topologise $\mathcal{C}_1$. The former utilises only the topology of the range of the signature map, while the latter relies only on the topology of the domain. The metric topology lies somewhere in the middle, defined through both the topology on the domain and properties of the signature. Variants of (1) may also be of interest, e.g. where a topology on $\mathcal{C}_{1}$ is induced from a subspace topology on the tensor algebra, but the principle remains the same \cite{C}. We focus on the product topology as it is the weakest one in which all projections of the signature map are continuous.
Given the central role played by uniform approximation to a now-growing body of applications, it seems important to establish the basic topological features of these approaches. The main conclusions of Section \ref{sec: top} are captured in the following list:
\begin{itemize}
    \item All of the three topologies listed above are separable and Hausdorff.
    \item The collections of open sets are strictly ordered by inclusion, the product topology being the weakest and the metric topology being the strongest.
    \item The product topology is metrisable and $\sigma-$compact, but not a Baire space, and so neither Polish nor locally compact.
    \item A completion of the product topology is given by the subspace $G^{*} \subset \prod_{i=0}^{\infty} V^{\otimes i}$ of group-like elements.
    \item The quotient topology is not metrisable.
    \item The metric $d$ is not complete.
\end{itemize}
The condition of metrisability is stated as an assumption in a number of recent works \cite{BLO, persistence, LG} and our results would therefore seem to preclude the use of quotient topology in these cases. A careful reading of the reference \cite{Giles} underpinning these results, however, shows the property of complete regularity and not metrisability is the key assumption. A relevant resulting question, which we have not been able to resolve, is whether the quotient topology is completely regular.  We will discuss this and related points in more detail throughout the text.

The uniform approximation theory referred to above invites the study of two points. The first is the availability of compact subsets of $\mathcal{C}_{1}$ --  which should, ideally, be explicitly describable -- and the second is to understand the classes of continuous functions on these subsets. With respect to both points, the underlying topology has an unignorable bearing. We investigate this in the final section of the paper proving that:
\begin{itemize}
    \item The subset $B(r)$ of unparameterised paths with tree reduced length bounded by $r< \infty$ is a compact subset of $\mathcal{C}_{1}$ in the product topology.  
    \item The fixed-time solution of a differential equation $dy_{t}=f(y_{t})d\gamma_{t}$, under suitable regularity and growth conditions on $f$, induces a well-defined function on $\mathcal{C}_{1}$. This function is continuous on $B(r)$.
\end{itemize}
The relevance of these results is accounted for by the uses of the signature; physical limits on the ability to record and store data impose a bound on the (tree-reduced) lengths of the paths that can practically be considered. From this point of view, restricting attention to functions on $B(r)$ is a natural step. The second item ensures that a rich class of input-output response pairs are admissible as the true underlying causal relationship, e.g. in regression analysis.  

From the perspective of doing probability on $\mathcal{C}_{1}$, it can be desirable to work with measures defined on the Borel $\sigma$-algebra of a Polish topology; see e.g. the monograph \cite{G} for a detailed overview. The results above exclude having this structure for both the product and the quotient topologies. Nevertheless, the $\sigma$-compactness of the product topology still offers a way to obtain some of the benefits of Polishness, notably versions of Ulam's and Lusin's Theorems still hold.  In Section \ref{sec: diffeq} we illustrate how this approach can be used to validate and extend the framework of the expected signature model proposed in \cite{LLN}. If a Polish space is genuinely needed, then an alternative is to consider completions. In the case of the product topology, this leads to the subspace of group-like elements $G^{*}$.

\subsection{More general unparameterised paths}
We comment here briefly on the notion of unparameterised paths that we adopt. In this article, as is customary in the literature, see for example \cite{CF, CO}, we work with paths over a fixed pre-defined interval (which we take to be $[0,1]$ for convenience). A plausible alternative might be to adapt our definitions to take account for paths defined on possibly different compact subintervals of $\mathbb{R}$. While the results for the product topology would remain unchanged, this would result in a different topology for the domain of the signature map and the conclusions for the quotient and metric topologies would need further definitions to be interpreted. Technical obstacles for instance prevent the easy construction of a $1-$variation type distance between a path defined on an interval $[a,b]$ and another defined on $[c,d]$\footnote{One approach is to consider reparameterisations (non-decreasing surjections) of the interval $[a,b]$ onto  $[c,d]$. However, the inverse of a reparameterisation is not guaranteed to be a reparameterisation. Additionally, whilst every path is a continuous reparameterisation of itself run at constant speed, the reverse does not necessarily hold. Combined, these problems pose difficulties in defining a distance that is symmetric and satisfies the triangle inequality}. 
\section{Signatures and unparameterised paths}
We work with spaces of paths defined on the closed interval $[0,1]$ taking values in a finite-dimensional vector space $V$. We will use $|\cdot|$ to denote a fixed but arbitrary norm which we assume to be derived from an inner-product $\left\langle\cdot,\cdot \right\rangle$ on $V$. The properties of interest to us are invariant under translation by a constant vector; as such, we will assume throughout that all paths start at the origin $0 \in V$. The notion of finite $p$-variation is well known.
\begin{definition}
Let $1\leq p < \infty$. We denote by $C_{p}$ the space of paths $\gamma:[0,1]\mapsto V$ such that $\gamma_{0}=0$ and which have finite $p$-variation in the sense that
\begin{equation}
\left\vert \left\vert \gamma\right\vert \right\vert _{p}=\left( \sup_{D=\{t_{i}\}} \sum_{i} \left| \gamma_{t_{i}}-\gamma_{t_{i-1}} \right|^{p} \right)^{1/p} < \infty.
\end{equation}
\end{definition}
The signature of a path will be a central object in the later discussion.

\begin{definition}
Let $1\leq p < 2$ and assume that $\gamma$ belongs to $C_{p}$. The signature of $\gamma$ is defined to be the element of the product space
\begin{equation*}
S(\gamma) = (1,S_{1}(\gamma),S_{2}(\gamma),\dots) \in \prod_{i=0}^{\infty}V^{\otimes i},
\end{equation*}
where for every $n=0,1,2,\dots$ the expression $S_{n}(\gamma)$ denotes the $n$-fold iterated Young integral 
\begin{equation*}
S_{n}(\gamma)=\int_{0<t_{1}<t_{2}<\dots<t_{n}<1}d\gamma_{t_{1}} d\gamma_{t_{2}} \dots d\gamma_{t_{n}} \in V^{\otimes n}.  
\end{equation*}
\end{definition}
\begin{notation}
For $1\leq p <2$ we let $\mathcal{S}_{p}$ denote the subset $S(C_{p}) \subset \prod_{i=0}^{\infty}V^{\otimes i}$, the image of the signature map. When $p=1$ we write $\mathcal{S}_{1}=\mathcal{S}$.
\end{notation}
We let $\pi_{n}:\prod_{i=0}^{\infty}V^{\otimes i} \mapsto \prod_{i=0}^{n}V^{\otimes i}$ denote the canonical projection and we call $S^{(n)}(\gamma):=\pi_{n}S(\gamma)$ the $n$-step truncated signature of $\gamma$. The map $S^{(n)}:C_p\to\prod_{i=0}^{n}V^{\otimes i}$ is continuous, see \cite[Corollary 2.11]{SaintFlour}. The set $\prod_{i=0}^{\infty}V^{\otimes i}$ becomes an algebra, the tensor algebra, when equipped with an appropriate collection of vector space operations and an associative product (the tensor product, which we shall write implicitly). The definition is standard and we do not repeat it; the reader is referred to \cite{reut}. As is usual, we use $T((V))$ when we wish to emphasise the role of the algebra structure in addition to the underlying set $\prod_{i=0}^{\infty}V^{\otimes i}$. The identity element of this algebra is
\[
\mathbf{1}:=(1,0,0,\dots)\in \prod_{i=0}^{\infty}V^{\otimes i}.
\]
\begin{definition} 
The set of group-like elements $G^{*}$ is the subset of $\prod_{i=0}^{\infty}V^{\otimes i}$ defined by 
\[
G^{*}=\{\mathbf{x} \in T((V)): \text{ $\forall n$ } \pi_{n}\mathbf{x} \in S^{(n)}(C_{1})  \}.  
\]
\end{definition}
\begin{remark} In other words, $\mathbf{x}$ is in $G^{*}$ if and only if every projection $\pi_{n}\mathbf{x}$ can be realised as the $n$-step truncated signature of a continuous bounded variation path.
There are alternative ways to characterise $G^{*}$. For example, if $\exp:T((V))\rightarrow T((V))$ is the exponential map with respect to the tensor product on $T((V))$, then
$G^{*}=\exp(\mathfrak{g})$, where $\mathfrak{g}$ is the Lie algebra generated by $V$. See Theorem 3.2 of \cite{reut} for further equivalent algebraic characterisations.
\end{remark}
\begin{lemma} \label{polish}
The product space $\prod_{i=0}^{\infty}V^{\otimes i}$ endowed with the product topology is a Polish space. The set $G^{*}$ is closed in $\prod_{i=0}^{\infty}V^{\otimes i}$ with respect to this topology and $\mathcal{S} \subseteq G^{*} = \bar{\mathcal{S}}$.
\end{lemma}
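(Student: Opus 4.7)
The proof naturally decomposes into three parts matching the three claims, and I would tackle them in order of increasing subtlety.

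For the first claim, I would simply invoke the standard fact that a countable product of Polish spaces is Polish. Each factor $V^{\otimes i}$ is a finite-dimensional real vector space and is thus homeomorphic to some $\mathbb{R}^{d_i}$, hence Polish; the product topology on $\prod_{i=0}^\infty V^{\otimes i}$ is therefore induced by a complete, separable metric (e.g.\ $\rho(\mathbf{x},\mathbf{y})=\sum_i 2^{-i}(|\mathbf{x}_i-\mathbf{y}_i|\wedge 1)$).

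For closedness of $G^{\ast}$, the plan is to write $G^{\ast} = \bigcap_{n\ge 0} \pi_n^{-1}\bigl(S^{(n)}(C_1)\bigr)$ directly from the definition. Since $\pi_n$ is continuous, it suffices to show each image $S^{(n)}(C_1)\subset\bigoplus_{i=0}^{n} V^{\otimes i}$ is closed in the (finite-dimensional) truncated tensor algebra. This is the classical statement that the image of the step-$n$ signature equals the free nilpotent Lie group $G^{(n)}(V)$ of step $n$, which can be realised as $\exp(\mathfrak{g}_n)$ for the truncated free Lie algebra $\mathfrak{g}_n$; equivalently, $G^{(n)}(V)$ is cut out by the polynomial group-like equations $\Delta \mathbf{x} = \mathbf{x}\otimes\mathbf{x}$ (with $\Delta$ the shuffle coproduct), which are continuous. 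Either characterisation gives the closedness, and I would cite \cite{reut} for the identification with $\exp(\mathfrak{g}_n)$ rather than reprove it.

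For the final chain $\mathcal{S}\subseteq G^{\ast} = \bar{\mathcal{S}}$, the containment $\mathcal{S}\subseteq G^{\ast}$ is tautological: if $\mathbf{x}=S(\gamma)$ with $\gamma\in C_1$, then $\pi_n\mathbf{x}=S^{(n)}(\gamma)\in S^{(n)}(C_1)$. Combined with closedness of $G^{\ast}$, this gives $\bar{\mathcal{S}}\subseteq G^{\ast}$. For the reverse inclusion, given $\mathbf{x}\in G^{\ast}$, the definition furnishes, for each $n$, a path $\gamma_n\in C_1$ with $S^{(n)}(\gamma_n)=\pi_n\mathbf{x}$. The key observation is that for any fixed level $m$ and every $n\ge m$,
\[
\pi_m S(\gamma_n)=\pi_m\bigl(\pi_n S(\gamma_n)\bigr)=\pi_m\bigl(\pi_n\mathbf{x}\bigr)=\pi_m\mathbf{x},
\]
so each coordinate projection $\pi_m S(\gamma_n)$ is eventually constant and equal to $\pi_m\mathbf{x}$. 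Convergence in the product topology is convergence coordinate-wise, so $S(\gamma_n)\to \mathbf{x}$, which shows $\mathbf{x}\in\bar{\mathcal{S}}$.

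The only non-trivial step is the closedness of $S^{(n)}(C_1)$ in the truncated tensor algebra; everything else is a formal manipulation of the product topology. I therefore expect the bulk of the prose to be a brief citation to the free nilpotent group description, with the approximation argument $S(\gamma_n)\to\mathbf{x}$ written out explicitly as above.
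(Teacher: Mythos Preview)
Your proposal is correct and follows essentially the same route as the paper: both invoke the countable product of Polish spaces for the first claim, both reduce closedness of $G^{\ast}$ to the (cited) closedness of the truncated signature image $S^{(n)}(C_1)=G_n$ in the finite-dimensional tensor algebra, and both prove $G^{\ast}\subseteq\bar{\mathcal{S}}$ via the approximating sequence $S(\gamma_n)\to\mathbf{x}$. The only cosmetic difference is that you phrase closedness as $G^{\ast}=\bigcap_n\pi_n^{-1}(G_n)$ while the paper checks sequential closure directly; these are equivalent formulations of the same idea.
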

\begin{proof}
A countable product of Polish spaces is a Polish space from general theory. To see that $G^{*}$ is closed we show that the limit $\mathbf{x}$ of any convergent sequence $(\mathbf{x}_{m})_{m=1}^{\infty}$ in $G^{*}$ is again in $G^{*}$. Let $\mathbf{x}_{m}=(x_{m,0},x_{m,1},\dots,x_{m,j},\dots)$, the convergence $ \lim_{m\rightarrow \infty} \mathbf{x}_{m}= \mathbf{x}$ in the product topology holds if and only if $x_{m,j}\rightarrow x_{j}$ in $V^{\otimes j}$ as $m \rightarrow \infty$ for every $j=0,1,2,\dots$ For any $n$ its follows that $\pi_{n}\mathbf{x}_{m} \rightarrow \pi_{n}\mathbf{x}$ as $m \rightarrow \infty$, and since it is known that $G_{n}=\pi_{n}(G^{*})$ is closed in the truncated tensor algebra, see for example \cite[Theorem 7.30]{FV} or \cite[Proposition 2.25]{SaintFlour}, it follows that $\pi_{n}\mathbf{x}$ is also in $G_{n}$. By definition this implies $\mathbf{x}$ is then in $G^*$. To see the final statement, it is clear by definition that $\mathcal{S} \subseteq G^{*}$. Then if $\mathbf{x}$ is in $G^{*}$, for every $n$ there exists by definition $\gamma_{n}$ such that $S^{(n)}(\gamma_{n})=\pi_{n} \mathbf{x}$. The sequence determined by $\mathbf{x}_{n}=S(\gamma_{n})$ in $\mathcal{S}$ then converges to $\mathbf{x}$ in $\prod_{i=0}^{\infty}V^{\otimes i}$. It follows that $G^{*}=\bar{\mathcal{S}}$.
\end{proof}
\begin{remark}
It is well known that the inclusion $\mathcal{S}\subset G^*$ is in fact strict, and the results of Section \ref{sec: top} provide a new (non-constructive) proof.
\end{remark}
The tensor algebra plays a special role in the study of the signature because of the way it interacts with a natural binary operation on path space. Recall that concatenation $*:C_{p}\times C_{p} \to C_{p}$ is the binary operation defined by 
\[
(\gamma*\sigma)_{t} =
\begin{cases}
\gamma_{2t}, & \text{for } 0\leq t\leq \frac{1}{2}\\
\sigma_{2t-1}+\gamma_{1}, & \text{for } \frac{1}{2}\leq t\leq 1.
\end{cases}
\]
Another important operation is the unary operation $\overleftarrow \ : C_{p} \to C_{p}$ which reverses the order of the path: 
\[
\overleftarrow{\gamma}_{t}:=\gamma_{1-t}-\gamma_{1} 
\]
The following relations hold \cite{Chen1}:
\begin{equation} \label{sig ops}
S(\gamma*\sigma)=S(\gamma)S(\sigma), \text{ and } 
\mathbf{1}=S(\overleftarrow{\gamma})S(\gamma)=S(\gamma)S(\overleftarrow{\gamma}),
\end{equation}
where multiplication is in $T((V))$.

Distinct paths can have equal signatures. For example, for any $\gamma \neq o$ we have that $\gamma*\overleftarrow{\gamma} \neq o$, while from (\ref{sig ops}) 
\[
S(o)=\mathbf{1}=S(\gamma*\overleftarrow{\gamma}).
\]
More generally, suppose $\tau:[0,1]\mapsto [0,1]$ is any parameterisation of $[0,1]$, i.e. a continuous, non-decreasing surjection of the interval $[0,1]$ onto itself. Then $S(\gamma\circ \tau)=S(\gamma)$, namely the signature is invariant under reparameterisation. For $1\leq p< 2$ we can define an equivalence relation on $C_{p}$ by identifying paths with the same signature. One of the main results of \cite{HL} is to show that this coincides with the notion of tree-like equivalence and hence to give a complete description of the equivalence classes. We recall that a path $\gamma$ is tree-like if there exists a real tree $\mathcal{T}$ such that $\gamma$ admits a factorisation $\gamma=\phi \circ \rho$ through a pair of continuous maps $\rho:[0,1]\mapsto \mathcal{T}$ and $\phi:\mathcal{T} \mapsto V$, where $\rho(0)=\rho(1)$. We then have the following definition. 
\begin{definition}[Tree-like equivalence]
Let $1\leq p <2$ and assume that $\gamma$ and $\sigma$ are in $C_{p}$. If $\gamma*\overleftarrow{\sigma}$ is tree-like then we call $\gamma$ and $\sigma$ tree-like equivalent and write $\gamma \sim_{\tau} \sigma$.  
\end{definition}
In \cite{HL} for the case when $p=1$, and later in \cite{BGLY} for the case $p>1$, which even covers weakly geometric rough paths, the following collection of results are proved.
\begin{theorem}[\cite{HL}, \cite{BGLY}]
\label{HL}Let $1 \leq p <2 $, then 
\begin{enumerate}
\item The relation $\sim_{\tau}$ defines an equivalence relation on $C_{p}$ which we call tree-like equivalence.
\item For any $\gamma$ and $\sigma$ in $C_{p}$, it holds that $%
\gamma\sim_{\tau}$ $\sigma$ if and only if $S\left( \gamma\right) =S\left(
\sigma\right) .$
\item In the case $p=1$, each $\sim_{\tau}$ equivalence class contains an element of minimal length. This element is called the tree-reduced representative. It is unique up to its parameterisation.
\end{enumerate}
\end{theorem}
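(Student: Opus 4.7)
The plan is to handle the three parts in sequence, each being a substantial result of \cite{HL} (and \cite{BGLY} for $p>1$). For (1), reflexivity is immediate: the path $\gamma * \overleftarrow{\gamma}$ factors through the segment $[0,1/2]$ viewed as a real tree via $\rho(t) = \min(t, 1-t)$ and an appropriate $\phi$, with $\rho(0) = \rho(1) = 0$. Symmetry exploits that $\sigma * \overleftarrow{\gamma}$ is a reparameterisation of $\overleftarrow{\gamma * \overleftarrow{\sigma}}$, together with the observation that time-reversal preserves tree-likeness (replace $\rho$ by $\rho(1-\cdot)$). Transitivity is subtler: concatenating two tree-like paths at a shared endpoint produces a tree-like path (glue the two real trees at the basepoint), and since $\overleftarrow{\sigma} * \sigma$ is itself tree-like, one argues that $(\gamma * \overleftarrow{\sigma}) * (\sigma * \overleftarrow{\eta})$ is tree-like equivalent to $\gamma * \overleftarrow{\eta}$. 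In practice the cleanest route is to derive (1) as a corollary of (2), since equality of signatures is trivially an equivalence relation.

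For (2), the implication $\gamma \sim_\tau \sigma \Rightarrow S(\gamma) = S(\sigma)$ reduces via (\ref{sig ops}) to showing that every tree-like path has trivial signature. I would argue this by approximating the loop on the real tree by finite piecewise-linear out-and-back excursions along edges of a finite subtree: each such excursion has trivial signature by the second identity of (\ref{sig ops}), and continuity of $S_n$ in the 1-variation topology passes the cancellation to the limit. The converse is equivalent to showing that $S(\eta) = \mathbf{1}$ forces $\eta$ to be tree-like, which is the main theorem of \cite{HL}, later extended to $1 \leq p < 2$ in \cite{BGLY}; I would invoke it as a black box. This converse is the principal analytic obstacle of the whole statement, resolved in \cite{HL} by constructing a real-tree structure on $\eta$ via a delicate ``height'' function built from iterated integrals.

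For (3), existence of a tree-reduced representative proceeds by direct minimisation. Set $L := \inf \{ \|\sigma\|_1 : \sigma \sim_\tau \gamma \}$, take a minimising sequence reparameterised to constant speed so that the family is uniformly Lipschitz, extract a uniform limit $\gamma^*$ by Arzel\`a--Ascoli, use lower semicontinuity of $\|\cdot\|_1$ under uniform convergence to get $\|\gamma^*\|_1 \leq L$, and use continuity of the signature on uniformly 1-variation bounded sets to get $S(\gamma^*) = S(\gamma)$. Uniqueness up to parameterisation then follows from (2) combined with a further minimality argument: if $\gamma_1^*$ and $\gamma_2^*$ are both tree-reduced representatives of $[\gamma]$, then $\gamma_1^* * \overleftarrow{\gamma_2^*}$ is tree-like, and the minimality of their lengths forces their traced images to coincide as unparameterised curves, so the two paths differ only by a non-decreasing reparameterisation of $[0,1]$.
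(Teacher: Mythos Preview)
The paper does not give its own proof of this theorem: it is stated as a collection of results from \cite{HL} (for $p=1$) and \cite{BGLY} (for $1<p<2$), and is invoked without argument. There is therefore no in-paper proof to compare your proposal against.

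Your sketch is broadly faithful to the structure of the arguments in \cite{HL}, and your suggestion to derive (1) as a corollary of (2) is sensible. A few of your steps would need substantially more work to become proofs rather than plausibility arguments. In (2), the approximation of a tree-like path by finite out-and-back excursions requires control of the $1$-variation (not merely uniform) distance to pass to the limit, and for $1<p<2$ your appeal to the $1$-variation topology is not available; the argument in \cite{BGLY} proceeds differently. In (3), the final clause ``the minimality of their lengths forces their traced images to coincide as unparameterised curves'' is precisely the non-trivial content of uniqueness and is not justified by what precedes it; in \cite{HL} this is established via the structure of the real tree associated to the path, not by a soft minimality argument. If you intend to invoke the converse in (2) as a black box from \cite{HL}, it would be consistent to do the same for (1) and (3), since all three are proved together in that reference.
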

\begin{definition}
Let $\gamma$ in $C_1$, then we denote the $\sim_\tau$ equivalence class of $\gamma$ by $[\gamma]$. The quotient space $C_1/\sim_\tau=\left\{[\gamma]:\gamma\in C_1\right\}$ is denoted by $\mathcal{C}_1$.
\end{definition}
\subsection{Tree-reduced paths}
It will be useful for us to work with a concrete representative of an equivalence class. When $p=1$ the tree-reduced representative is a natural choice. It is only necessary to fix the parameterisation.
\begin{definition}
Let $\gamma$ in $C_{1}$. We will use $%
\gamma^{\ast}$ to denote the tree-reduced representative of $\left[ \gamma \right]$ parameterised at constant speed.
\end{definition}
\begin{remark}
If $\gamma\in C_1$ is parameterised at constant speed, then $\gamma$ is Lipshitz continuous with Lipshitz constant $\left\vert\left\vert\gamma\right\vert\right\vert_1$.
\end{remark}
We can manufacture simple examples of paths that are already tree-reduced by considering axis paths defined with respect to an orthonormal basis. Given $v$ in $V$ we let $\gamma_{v}$ denote the linear path in $C_{1}$ whose derivative $\gamma_v'\equiv v$ on $[0,1]$. 
\begin{example}\label{pwlinear}
For a collection of $v_{1},v_{2},\dots,v_{m}$ in $V$ the piecewise linear path $\gamma:[0,1] \mapsto V$ in $C_{1}$ defined by $\gamma=\gamma_{v_{1}}*\gamma_{v_{2}}*\dots*\gamma_{v_{m}}$
has length $L=|v_{1}|+|v_{2}|+\dots+|v_{m}|$. When parameterised at constant speed it is defined by
\begin{equation}\label{constant speed}
    \gamma_{t}'=L\hat{v}_{i}:=L\frac{v_{i}}{|v_{i}|} \text{ for $t$ in $[t_{i-1}, t_{i}]$}, 
\end{equation}
where $t_0=0$ and
\[
t_{i}=\frac{1}{L}\sum_{j=1}^{i} |v_{j}| \text{ for $i=1,..,m$}.
\]
\end{example}
From now on, unless stated otherwise, we assume that all piecewise linear paths as in Example \ref{pwlinear} are parameterised at constant speed. The following result is known more generally for $C^2$ paths \cite{HL} and irreducible piecewise linear paths \cite{LX}, but the arguments rely delicately on estimates of the hyperbolic development of the path. In the case of axis paths, it is possible to present a simplified and direct proof using elementary tools. The use of axis paths will be fundamental to our subsequent discussion, and so we include a proof for completeness.
\begin{lemma}
\label{tree-reduced}Let $\gamma:\left[ 0,1\right] \rightarrow V$ in $%
C_{1}$ be of the form $\gamma=\gamma_{v_{1}}*\gamma_{v_{2}}*\dots*\gamma_{v_{m}}$ as in Example \ref{pwlinear}. Assume further that for every consecutive pair of vectors $\left\langle v_{i},v_{i+1} \right\rangle=0$ for $i=1,\dots,m-1$. Then $\gamma$ is tree reduced.
\end{lemma}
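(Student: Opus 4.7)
The plan is to argue by contradiction using a signature-based lower bound on the $1$-variation. Suppose that $\gamma$ is not tree-reduced. Then by Theorem \ref{HL}(3) the tree-reduced representative $\gamma^{\ast}$ of $[\gamma]$ satisfies $\|\gamma^{\ast}\|_{1} < \|\gamma\|_{1} = \sum_{i=1}^{m}|v_{i}|$, while $S(\gamma^{\ast}) = S(\gamma)$. Writing $L := \sum_{i=1}^{m}|v_{i}|$, it is enough to prove that every $\sigma \in C_{1}$ with $S(\sigma) = S(\gamma)$ obeys $\|\sigma\|_{1} \geq L$, a contradiction.

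The starting tool is the standard factorial estimate
\[
\|S_{n}(\sigma)\| \leq \frac{\|\sigma\|_{1}^{n}}{n!},
\]
valid for any $\sigma \in C_{1}$ as a direct consequence of the iterated-integral definition of $S_{n}$; see \cite{SaintFlour}. Dually, for every $T \in V^{\otimes n}$ we have $|\langle S_{n}(\gamma), T\rangle| \leq \|T\|\|\sigma\|_{1}^{n}/n!$. I would combine this with the explicit piecewise linear expansion obtained from $S(\gamma) = \exp(v_{1})\cdots\exp(v_{m})$ and Chen's identity (\ref{sig ops}):
\[
S_{n}(\gamma) = \sum_{\substack{k_{1}, \ldots, k_{m} \geq 0 \\ k_{1}+\cdots+k_{m} = n}} \frac{v_{1}^{\otimes k_{1}} \otimes \cdots \otimes v_{m}^{\otimes k_{m}}}{k_{1}!\cdots k_{m}!},
\]
and then select a carefully chosen unit tensor $T$ so as to extract a sharp lower bound on $\langle S_{n}(\gamma), T\rangle$.

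The concrete choice is $T_{n} := \hat{v}_{1}^{\otimes n_{1}} \otimes \hat{v}_{2}^{\otimes n_{2}} \otimes \cdots \otimes \hat{v}_{m}^{\otimes n_{m}}$, for any positive multi-index $(n_{1}, \ldots, n_{m})$ summing to $n$. The key combinatorial claim is that the orthogonality hypothesis $\langle \hat{v}_{i}, \hat{v}_{i+1}\rangle = 0$ forces every cross term (those with $(k_{i}) \neq (n_{i})$) in the piecewise linear expansion to pair trivially with $T_{n}$, leaving only the diagonal contribution
\[
\langle S_{n}(\gamma), T_{n}\rangle = \prod_{i=1}^{m} \frac{|v_{i}|^{n_{i}}}{n_{i}!}.
\]
The verification is position-by-position: viewing the generic summand and $T_{n}$ as block-decomposed strings of unit vectors, at the first position where the block structures of $(k_{i})$ and $(n_{i})$ disagree one produces a pairing of the form $\langle \hat{v}_{j}, \hat{v}_{j\pm 1}\rangle = 0$. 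When the cross term instead skips blocks (some $k_{i}=0$), the pairing is not immediately consecutive, but a careful tracking of subsequent block transitions shows that the spanning block of the signature term must eventually straddle a pair of adjacent $T_{n}$-blocks, producing a consecutive orthogonal pairing. Making this case analysis watertight is the principal obstacle of the proof.

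Granting the claim, we obtain
\[
\|\sigma\|_{1}^{n} \geq \binom{n}{n_{1}, \ldots, n_{m}} \prod_{i=1}^{m} |v_{i}|^{n_{i}}
\]
for every positive multi-index summing to $n$. Choosing $n_{i} := \lfloor n|v_{i}|/L\rfloor$ (which is positive for all $n$ sufficiently large since each $|v_{i}| > 0$) and applying Stirling's formula, the right-hand side equals $L^{n}$ up to factors that are polynomial in $n$. Taking $n$-th roots and letting $n \to \infty$ gives $\|\sigma\|_{1} \geq L$, the desired contradiction.
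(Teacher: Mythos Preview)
Your argument is correct and takes a genuinely different route from the paper's. Both proofs establish the same asymptotic lower bound $\limsup_{n}\bigl(n!\,\|S_{n}(\gamma)\|\bigr)^{1/n}\ge L$ and then invoke the standard factorial upper bound, but they reach the lower bound by very different means. The paper works with the projective tensor norm and the bilinear contraction $\varphi(u_{1}\otimes\cdots\otimes u_{2n})=\prod_{j}\langle u_{2j-1},u_{2j}\rangle$, rewrites $(2n)!\,\varphi(S_{2n}(\gamma))$ as an expectation over the order statistics of $2n$ i.i.d.\ uniform samples, and then bounds the probability that each linear segment receives an even number of samples via a Rademacher identity. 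Your approach instead exploits Chen's closed-form expansion for piecewise linear signatures and pairs $S_{n}(\gamma)$ against a single rank-one tensor $T_{n}=\hat v_{1}^{\otimes n_{1}}\otimes\cdots\otimes\hat v_{m}^{\otimes n_{m}}$, reducing everything to a block-combinatorics lemma and Stirling's formula. Your route is more elementary---no probability, no projective-norm machinery---at the cost of the combinatorial claim, whereas the paper's probabilistic argument avoids that bookkeeping but brings in order statistics and a Rademacher estimate.

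On the point you flag as the principal obstacle: the vanishing of every off-diagonal term can be made watertight cleanly. For a composition $(k_{i})$ let $\alpha(p)$ be the block index of position $p$ in $v_{1}^{\otimes k_{1}}\otimes\cdots\otimes v_{m}^{\otimes k_{m}}$, and let $\beta(p)$ be the corresponding index for $T_{n}$. Since every $n_{i}\ge 1$, the function $\beta$ is non-decreasing with $\beta(1)=1$, $\beta(n)=m$, and increments of exactly $1$; hence $\delta:=\alpha-\beta$ satisfies $\delta(1)\ge 0$, $\delta(n)\le 0$, and $\delta(p+1)\ge\delta(p)-1$ for all $p$ (because $\alpha$ can only jump upward while $\beta$ moves by at most $1$). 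If $(k_{i})\neq(n_{i})$ then $\delta\not\equiv 0$, and the one-sided Lipschitz bound forces $\delta(p)\in\{-1,+1\}$ for some $p$, so the factor at that position is $\langle v_{\beta(p)\pm 1},\hat v_{\beta(p)}\rangle=0$. This is somewhat sharper than your sketch suggests: one does not need to argue that a spanning block ``straddles'' a $T_{n}$-transition (which by itself would not suffice, since $\hat v_{i}\perp\hat v_{i+1}$ does not imply $v_{j}$ is orthogonal to one of them), but rather that the index discrepancy must at some point equal exactly~$\pm 1$.
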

\begin{proof}
We prove the result indirectly not through the definition of tree-like equivalence, but by using asymptotic properties of the signature. We recall that the projective tensor norm is the largest cross
norm on $V^{\otimes n}$. For a tensor $A$ in the algebraic $n$-fold tensor product space it is defined by
\[
\left\vert \left\vert A \right\vert \right\vert _{\vee,n}=\inf\left\{ \sum_{i=1}^{r} |u_{1}^{i}|\dots|u_{n}^{i}|: A=\sum_{i=1}^{r} u_{1}^{i}u_{2}^{i}\dots u_{n}^{i} \right\}. 
\]
It can be deduced that if $A=\sum_{i} u_{1}^{i}u_{2}^{i}\dots u_{2n}^{i}$ then 
\[
\left\vert \left\vert A \right\vert \right\vert _{\vee,2n}\geq \varphi(A):=\sum_{i}\left\langle u_{1}^{i},u_{2}^{i} \right\rangle \dots\left\langle u_{2n-1}^{i},u_{2n}^{i} \right\rangle. 
\]
Note here that $\varphi$ is a bounded linear map on $V^{\otimes n}$. Consider $\gamma$ with the constant-speed parameterisation described in Example \ref{pwlinear}. Then, using the previous inequality, we obtain the lower bound
\begin{align*}
(2n)!\left\vert \left\vert S_{2n}\left( \gamma\right) \right\vert \right\vert
_{\vee,2n} & \geq (2n)! \varphi\left(\int_{\Delta^{2n}\left[ 0,1\right] }\gamma_{t_1}^\prime\otimes\gamma_{t_2}^\prime\otimes\dots\otimes\gamma_{t_{2n}}^\prime dt_{1}dt_{2}\dots dt_{2n}\right)\\
& = (2n)! \int_{\Delta^{2n}\left[ 0,1\right] }\varphi\left(\gamma_{t_1}^\prime\otimes\gamma_{t_2}^\prime\otimes\dots\otimes\gamma_{t_{2n}}^\prime\right) dt_{1}dt_{2}\dots dt_{2n}\\
& \geq (2n)! \int_{\Delta^{2n}\left[ 0,1\right] }\prod
_{j=1}^{n}\left\langle
\gamma_{t_{2j-1}}^{\prime},\gamma_{t_{2j}}^{\prime}\right\rangle
dt_{1}dt_{2}\dots dt_{2n} \\ 
& = \mathbb{E}\left[ \prod_{j=1}^{n}\left\langle \gamma_{U_{\left( 2j-1\right)
}}^{\prime},\gamma_{U_{\left( 2j\right) }}^{\prime}\right\rangle \right]=:\mathbb{E}\left[X_{n}\right],
\end{align*}
where $0\leq U_{\left( 1\right) }\leq U_{\left( 2\right) }\leq\dots\leq
U_{\left( 2n\right) }\leq1$ are the order statistics of an i.i.d. sample of $%
2n$ uniform $\left[ 0,1\right] $ random variables defined on a probability space $(\Omega, \mathcal{F}, \mathbb{P})$. Let $N_{i}$, for $i=1,\dots,m$, denote the number of realisations in this sample which are contained in the interval $[t_{i-1},t_{i}]$. We define three events by
\[
A=\bigcap_{i=1}^{m} \left\{N_{i} \text{ is even} \right\},\  
B=\bigcup_{i=1}^{m} \left\{N_{i}=0 \right\},\ 
C=B^{c} \cap \bigcup_{i=1}^{m} \left\{N_{i} \text{ is odd} \right\}. 
\]
The event $B$ is required, since it is possible for $X_n$ to be non-zero even if one of the $N_i$ is odd: the order statistics may ``skip'' a segment. Their union is $\Omega$, and it can easily be checked that 
\[
X_{n} = \left\{\begin{array}{lr}
        L^{2n}, & \text{ on } A \\
        0, & \text{ on } C, \\
        \end{array} \right. 
  \]
and therefore that 
\begin{equation}\label{lb}
(2n)!\left\vert \left\vert S\left( \gamma\right) \right\vert \right\vert
_{\vee,2n} \geq L^{2n}\mathbb{P}(A)-L^{2n}\mathbb{P}(B).
\end{equation}
Letting $\{E_{i}:i=1,2,\dots,m\}$ be a collection of i.i.d Rademacher random variables and $r=\min_{i=1,\dots,m}|t_{i}-t_{i-1}|\in (0,1)$, simple arguments can be employed to show 
\[
\mathbb{P}(A)=\mathbb{E}\left[\left(\sum_{i=1}^{m} E_{i} \frac{|v_{i}|}{L} \right)^{2n} \right]\geq \frac{2}{2^{m}}, \text{ and } \\
\mathbb{P}(B) \leq m (1-r)^{2n}.
\]
Using these inequalities together with (\ref{lb}) we obtain easily that  
\[
\limsup_{n\rightarrow\infty} \left( n!\left\vert \left\vert
S\left( \gamma\right) \right\vert \right\vert_{\vee, n}\right)^{\frac{1}{n}}\geq L.
\]
For any path $\tilde{\gamma}$ in $C_{1}$ with length $\tilde{L}$ such that $S(\gamma)=S(\tilde{\gamma})$ we deduce 
\[
L \leq \limsup_{n\rightarrow\infty} \left( n!\left\vert \left\vert
S\left( \gamma\right) \right\vert \right\vert_{\vee, n}\right)^{\frac{1}{n}}\leq \tilde{L},
\]
where the second inequality follows from \cite[Proposition 2.2]{SaintFlour}, and hence $\gamma$ is tree-reduced.
\end{proof}
The following construction can be found in \cite{LX}. It illustrates how the class of paths considered in the previous lemma can be used to gain insight into properties of the signature. It will be an integral tool in the next section.
\begin{example} [\cite{LX}] \label{LyonsXu}
Let $W$ be a two-dimensional vector subspace of $V$ which is identified with $\mathbb{R}^{2}$ through an orthonormal basis $\{v_{i}:i=1,2\}$. We can define two sequences $\{\rho_{n}:n=1,2,\dots\}$ and $\{\sigma_{n}:n=1,2,\dots\}$ of so-called axis paths by $\rho_{1}:=\gamma_{v_{1}}*\gamma_{v_{2}}$,  $\sigma_{1}:=\gamma_{v_{2}}*\gamma_{v_{1}}$ and then for $n=2,3,\dots$ by 
\[
\rho_{n}=\rho_{n-1}*\sigma_{n-1} \text{ and }
\sigma_{n}=\sigma_{n-1}*\rho_{n-1}.
\]
If two consecutive line segments $v_i$ and $v_{i+1}$ are positively collinear, then by replacing $\gamma_{v_{i}}*\gamma_{v_{i+1}}$ with $\gamma_{v_i+v_{i+1}}$, we may write each path in the form required for Lemma \ref{tree-reduced}. By virtue of this Lemma, each of these paths is tree-reduced. For every $n$, the paths $\rho_{n}$ and $\sigma_{n}$ have length $2^{n}$ and it was further shown in \cite{LX} that the terms in their signatures coincide up to degree $n$, i.e.
\[
S_{k}(\rho_{n})=S_{k}(\sigma_{n}) \text{ for $k=1,2,\dots,n$ }.
\]
Moreover, it can be shown that $S_{n+1}(\rho_{n})$ and $S_{n+1}(\sigma_{n})$ are not equal for any $n$. Consequently, each tree-reduced path $\Gamma_k:= \sigma_k*\overleftarrow{\rho}_{k}$ has length $2^{k+1}$ and satisfies
\[
S_{m}\left( \Gamma_{k}\right)
=S_{m}\left(\sigma_k*\overleftarrow{\sigma}_{k}\right)=0 \text{ for  $m=1,2,\dots,k$, while } S\left( \Gamma_{k}\right) \neq\mathbf{1},
\]
since the projection at level $k$ of the algebraic inversion and multiplication depends only on the level $k$ projections of the inputs.
\end{example}

\section{Topologies on unparameterised path space}\label{sec: top}
Our aim now is to explore the basic properties of three topologies on the space $\mathcal{C}_{1}$ of unparameterised paths. This will allow us, in the next section, to put forward a framework within which one can understand uniform approximation results for continuous functions defined on compact subsets of $\mathcal{C}_{1}$.  As noted in \cite{HL}, there is no canonical topology, but there are at least three principled approaches to constructing one:
\begin{enumerate}
\item  To use the injectivity of the signature map $S:$ $\mathcal{C}_{1}\rightarrow\mathcal{S}{\subset}\prod_{i=0}^{\infty}V^{\otimes i}$. By requiring that $S$ is a topological embedding, any topology on $\mathcal{S}$ induces a unique topology on $\mathcal{C}_{1}$. We focus subsequently on the subspace topology on  $\mathcal{S}$ with respect to the product topology on $\prod_{i=0}^{\infty}V^{\otimes i}$. We denote this by $(\mathcal{C}_{1},\chi_{_{\text{pr}}})$ and refer to it as the product topology.
    \item To use the quotient topology on $\mathcal{C}_{1}$ inherited from the 1-variation norm topology on $C_{1}$. We denote this topological space by $(\mathcal{C}_{1},\chi_{_{\tau}})$.    
    \item To define a metric using the tree-reduced representatives parameterised at constant speed:%
\begin{equation}\label{metric}
d\left( \left[ \gamma\right] ,\left[ \sigma\right] \right) := \left\vert \left\vert \gamma^{\ast}-\sigma^{\ast}\right\vert
\right\vert_{1}. 
\end{equation}
It is easily seen that this defines a metric on $\mathcal{C}_{1}$. The associated metric topology will be denoted by $(\mathcal{C}_{1},\chi_{d})$.
\end{enumerate}
\begin{notation}
Let $\pi: C_{1} \to \mathcal{C}_{1}$ be the surjective map which takes $\gamma$ to its tree-like equivalence class $[\gamma]$.
\end{notation}
Let us now show some simple properties of these topological spaces.
\begin{lemma} \label{single}
Every singleton $\{[\gamma]\}$ is closed in $\chi_{_{\text{pr}}}$. 
\end{lemma}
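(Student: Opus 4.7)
The plan is to reduce the claim to the fact that the ambient product space is Hausdorff, and then transport closedness back through the embedding that defines $\chi_{_{\text{pr}}}$.

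More concretely, first I would unpack the definition of $\chi_{_{\text{pr}}}$. By construction, $S:(\mathcal{C}_1,\chi_{_{\text{pr}}})\to (\mathcal{S},\text{subspace topology})$ is a homeomorphism, where $\mathcal{S}$ inherits its topology from the product topology on $\prod_{i=0}^{\infty}V^{\otimes i}$. In particular, a subset $F\subset \mathcal{C}_1$ is closed in $\chi_{_{\text{pr}}}$ if and only if $S(F)$ is closed in $\mathcal{S}$, i.e.\ if and only if there is a closed set $\tilde{F}\subset \prod_{i=0}^{\infty}V^{\otimes i}$ with $S(F)=\tilde{F}\cap \mathcal{S}$.

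Next I would appeal to Lemma \ref{polish}, which says $\prod_{i=0}^{\infty}V^{\otimes i}$ equipped with the product topology is Polish. Polish spaces are metrisable, hence Hausdorff, hence $T_1$, so every singleton in $\prod_{i=0}^{\infty}V^{\otimes i}$ is closed. Applying this with the point $S([\gamma])$, the set $\{S([\gamma])\}$ is closed in $\prod_{i=0}^{\infty}V^{\otimes i}$, and consequently $\{S([\gamma])\}=\{S([\gamma])\}\cap \mathcal{S}$ is closed in $\mathcal{S}$. Because $S$ is an embedding, its inverse is continuous from $\mathcal{S}$ to $(\mathcal{C}_1,\chi_{_{\text{pr}}})$, and so $\{[\gamma]\}=S^{-1}(\{S([\gamma])\})$ is closed in $(\mathcal{C}_1,\chi_{_{\text{pr}}})$.

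There is no real obstacle; the statement is a routine unwinding of the definition of $\chi_{_{\text{pr}}}$ combined with the Polish (and hence Hausdorff) structure of the ambient tensor algebra already established in Lemma \ref{polish}. The only thing to verify carefully is that $S$ is genuinely a homeomorphism onto its image $\mathcal{S}$ with the subspace topology, which is exactly the requirement built into the definition of $\chi_{_{\text{pr}}}$ and uses the injectivity of $S$ on $\mathcal{C}_1$ (Theorem \ref{HL}(2)).
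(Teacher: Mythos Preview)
Your proposal is correct and follows essentially the same approach as the paper: both argue that the ambient product space $\prod_{i=0}^{\infty}V^{\otimes i}$ is Hausdorff, hence singletons are closed there, and then pull this back through the topological embedding $S$. The only cosmetic difference is that the paper deduces Hausdorffness directly from $V$ being Hausdorff and the product of Hausdorff spaces being Hausdorff, whereas you route through Lemma \ref{polish} (Polish $\Rightarrow$ metrisable $\Rightarrow$ Hausdorff); this is immaterial.
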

\begin{proof}
Since $V$ is Hausdorff, then so is the product topology on $\prod_{i=0}^\infty V^{\otimes i}$. Hence singletons are closed in $\prod_{i=0}^\infty V^{\otimes i}$, and consequently singletons are closed in $\chi_{\text{pr}}$, since $S$ is a topological embedding.
\end{proof}


The following proposition shows that the three topologies we have proposed are distinct. In fact, the collections of open sets are strictly ordered with $\chi_{_{\text{pr}%
}}$ defining the weakest and $\chi_{d}$ the strongest of the topologies.
\begin{proposition}
\label{topologies}We have the strict inclusions $\chi_{_{\text{pr}%
}}\subset \chi_{\tau}\subset\chi_{d}$.
\end{proposition}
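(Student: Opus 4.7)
My plan is to split the proof into the two containment statements and then, separately, arguments for strictness, using the Lyons--Xu axis paths from Example~\ref{LyonsXu} for one direction and a thin--rectangle construction for the other.

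\textbf{Containments.} For $\chi_{\text{pr}} \subseteq \chi_\tau$, I would invoke the universal property of the quotient. Since each truncated signature $S^{(n)}:C_1\to\prod_{i=0}^n V^{\otimes i}$ is continuous, the full signature $S:(C_1,\|\cdot\|_1)\to \prod_{i=0}^\infty V^{\otimes i}$ is continuous in the product topology. As $S=\bar S\circ\pi$ factors through $\pi$, the induced map $\bar S:(\mathcal C_1,\chi_\tau)\to \prod_i V^{\otimes i}$ is continuous. Because $\chi_{\text{pr}}$ is by definition the topology making $\bar S$ a topological embedding into the subspace $\mathcal S$, the identity $(\mathcal C_1,\chi_\tau)\to(\mathcal C_1,\chi_{\text{pr}})$ is continuous. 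For $\chi_\tau\subseteq \chi_d$, I would note that $\phi([\gamma]):=\gamma^*$ is an isometry from $(\mathcal C_1,d)$ into $(C_1,\|\cdot\|_1)$ and that $\pi\circ\phi=\mathrm{id}_{\mathcal C_1}$, so the identity is continuous from $\chi_d$ to $\chi_\tau$.

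\textbf{Strict inclusion $\chi_{\text{pr}}\subsetneq\chi_\tau$.} Use $\Gamma_k:=\sigma_k*\overleftarrow{\rho}_k$ from Example~\ref{LyonsXu}. Because $S_m(\Gamma_k)=0$ for $m\le k$, the signatures converge to $\mathbf 1=S(o)$ coordinatewise, so $[\Gamma_k]\to[o]$ in $\chi_{\text{pr}}$. To show this convergence fails in $\chi_\tau$, I would exhibit a saturated open subset of $C_1$ whose image omits $[\Gamma_k]$ for large $k$. Specifically, set $W:=\bigcup_{k\ge 1}[\Gamma_k]$, which is $\sim_\tau$-saturated by construction. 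If $\gamma_j\in[\Gamma_{k_j}]$ converges in $\|\cdot\|_1$ to some $\gamma$, then $\|\gamma_j\|_1$ is bounded, and since $L([\Gamma_{k_j}])=2^{k_j+1}\le \|\gamma_j\|_1$ we must have $k_j$ bounded; pigeonholing onto some $k^*$ and using that $[\Gamma_{k^*}]=S^{-1}(\{S(\Gamma_{k^*})\})$ is closed in $C_1$ (as a continuous preimage of a singleton in a Hausdorff space), we conclude $\gamma\in W$. Hence $W$ is closed, so $U:=\mathcal C_1\setminus\pi(W)$ is $\chi_\tau$-open, contains $[o]$, and contains no $[\Gamma_k]$; thus $[\Gamma_k]\not\to[o]$ in $\chi_\tau$, contradicting $\chi_{\text{pr}}=\chi_\tau$.

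\textbf{Strict inclusion $\chi_\tau\subsetneq\chi_d$.} I would exhibit a sequence that converges in $\chi_\tau$ but not in $\chi_d$. Take $V=\mathbb R^2$ with orthonormal basis $e_1,e_2$, set $\gamma:=\gamma_{e_1}*\gamma_{-e_1}$ (tree-like, so $\gamma^*=o$), and
\[
\gamma_n:=\gamma_{e_1}*\gamma_{e_2/n}*\gamma_{-e_1}*\gamma_{-e_2/n}.
\]
The consecutive edges of $\gamma_n$ are pairwise orthogonal, so by Lemma~\ref{tree-reduced} the path $\gamma_n$ is tree-reduced, whence $\gamma_n^*=\gamma_n$ (at constant speed). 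A direct computation on each of the four linear pieces of the difference $\gamma_n-\gamma$ (parameterised at constant speed) shows it is piecewise linear with segment magnitudes of order $1/n$, hence $\|\gamma_n-\gamma\|_1=O(1/n)\to 0$. Continuity of $\pi$ then gives $[\gamma_n]\to[\gamma]$ in $\chi_\tau$, whereas
\[
d([\gamma_n],[\gamma])=\|\gamma_n^*-\gamma^*\|_1=\|\gamma_n\|_1=2+2/n\to 2,
\]
so $[\gamma_n]\not\to[\gamma]$ in $\chi_d$.

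\textbf{Main obstacle.} The delicate point is the strict containment $\chi_{\text{pr}}\subsetneq\chi_\tau$: one must produce a genuinely $\chi_\tau$-open neighborhood of $[o]$ that separates it from the Lyons--Xu sequence, which is awkward because the tree-reduced length function $L$ is only lower semicontinuous on $C_1$ and so sublevel sets $\{L<c\}$ are not a priori open. The resolution is the observation above that the union $\bigcup_k [\Gamma_k]$ itself is already saturated and that its closedness follows purely from the exponential growth $L([\Gamma_k])=2^{k+1}$, bypassing any need for upper semicontinuity of $L$.
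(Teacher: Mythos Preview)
Your proof is correct and follows essentially the same strategy as the paper's: the containments are obtained via continuity of $S$ (universal property of the quotient) and of $\pi\circ(\cdot)^*$, the first strict inclusion uses the Lyons--Xu paths $\Gamma_k$ and the closedness of $\bigcup_k[\Gamma_k]$ in $C_1$ via the length growth $2^{k+1}$, and the second uses a thin-rectangle family converging in $\|\cdot\|_1$ to a tree-like limit. The only differences are cosmetic---your containment arguments are phrased via factorisations rather than subbases and neighbourhood inclusions, and your rectangle has the segments in a different cyclic order than the paper's $\gamma_\epsilon$---but the underlying ideas and constructions are identical.
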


\begin{proof}
Letting $\left\vert\left\vert\cdot\right\vert\right\vert_{V^{\otimes k}}$ be a family of norms on the tensor powers $V^{\otimes k}$, a subbase for $\chi_{\text{pr}}$ is given by the collection of sets $%
\mathcal{V}$ of the form%
\[
V_{\sigma,k,\epsilon}=\left\{ \left[ \gamma\right] :\left\vert \left\vert
S_{k}\left( \sigma\right) -S_{k}\left( \gamma\right) \right\vert \right\vert_{V^{\otimes k}}
<\epsilon\right\} \text{ for }k\in\mathbb{N\cup}\left\{ 0\right\} \text{ and 
}\epsilon>0. 
\]
For any such set, the preimage $\pi^{-1}\left( V_{\sigma,k,\epsilon}\right)
=U_{\sigma,k,\epsilon}:=\left\{ \gamma:\left\vert\left\vert S_{k}\left( \sigma\right)
-S_{k}\left( \gamma\right) \right\vert\right\vert_{V^{\otimes k}} <\epsilon\right\}$ is open in $C_{1}.$ It follows that $\mathcal{V\subset}\chi_{\tau}$ and hence that $%
\chi_{_{\text{pr}}}\subseteq\chi_{\tau}.$\ To see the strict inclusion we consider a two-dimensional subspace $W$ of $V$ spanned by two orthonormal vectors $\{v_{1},v_{2}\}$. In this basis we consider the sequence of axis paths $(\Gamma_{k})_{k=1}^{\infty}$ constructed in Example \ref{LyonsXu}.
Define the set $A:=\left\{\left[ \Gamma_{k}\right]: k=1,2,\dots\right\} \subset%
\mathcal{C}_{1}$. Then $A$ is not closed in the product topology
since for every $m$ we have%
\[
S_{m}\left( \Gamma_{k}\right) \rightarrow0\text{ as }k\rightarrow\infty 
\]
while $A$ does not contain the equivalence class of tree-like paths $[o].$
On the other hand, $A$ is closed in the quotient topology because any
sequence $\left( \gamma_{n}\right) _{n=1}^{\infty}$ in $\pi^{-1}\left(
A\right) $ which converges to $\gamma$ in $C_{1}$ must be such
that $\sup_{n}\left\vert \left\vert \gamma_{n}\right\vert \right\vert
_{1}<\infty$. Using the fact that each $\Gamma_{k}$ is tree-reduced and $\left\vert
\left\vert \Gamma _{k}\right\vert \right\vert _{1}=2^{k+1}$ then shows that there exists 
$N$ such that $\left\{ \gamma_{n}:n\in%
\mathbb{N}
\right\} $ is a subset of $\cup_{k=1}^{N}\left[ \Gamma _{k}%
\right] = \cup_{k=1}^{N} \pi^{-1}\left(\{[\Gamma_{k}]\}\right)$. This latter set is closed in $C_1$ by Lemma \ref{single} and the fact that $\chi_{_{\text{pr}%
}}\subseteq \chi_{\tau}$. The limit $\gamma$ is then also in this set which is a subset of $\pi^{-1}(A).$

To prove that $\chi_{\tau}\subseteq\chi_{d}$ we let $\left[ \gamma\right] $ be in $\mathcal{C}_{1}$ and prove that every open neighbourhood $N$ of $\left[ \gamma\right] $ in $\chi_{\tau}$ is also a neighbourhood of $\left[ \gamma\right]$ in $\chi_{d}$. By definition, the pre-image $\pi ^{-1}\left(
N\right) \supset\left[ \gamma\right] $ is an open subset of $C_{1}$. The tree-reduced
representative $\gamma^{\ast}$ belongs to $\left[ \gamma\right] $ and consequently there exists $\delta>0$ such that $B_{\left\vert
\left\vert \cdot\right\vert \right\vert _{1}}\left(
\gamma^{\ast},\delta\right) =\{\sigma\in C_{1}:\left\vert
\left\vert \gamma^{\ast}-\sigma\right\vert \right\vert _{1}<\delta\}\subset
\pi^{-1}\left( N\right) .$ The result then follows because 
\[
N\supset\left\{ \left[ \sigma\right] :\left\vert \left\vert \gamma^{\ast
}-\sigma\right\vert \right\vert _{1}<\delta\right\} \supseteq\left\{ \left[
\sigma\right] :\left\vert \left\vert \gamma^{\ast}-\sigma^{\ast}\right\vert
\right\vert _{1}<\delta\right\} =B_{d}\left( \left[ \gamma\right]
,\delta\right). 
\]
To prove the strict inclusion we find a neighbourhood of the constant path $[o]
$ of the form $B_{d}\left( \left[ o\right] ,\delta\right) $ such that every $%
\chi_{_{\tau}}$-neighbourhood of $\left[ o\right] $ has non-empty
intersection with the complement $B_{d}\left( \left[ o\right] ,\delta\right) ^{c}:=\mathcal{%
C}_{1}\setminus B_{d}\left( \left[ o\right] ,\delta\right) .$ To
do so, we consider as above a two-dimensional subspace of $V$ spanned by an orthonormal set $\{v_{1},v_{2}\}$. Using this basis we define a family of axis paths given by: 
\[
\gamma_{\epsilon} = \gamma_{\epsilon v_{2}}*\gamma_{v_{1}}*\gamma_{-\epsilon v_{2}}*\gamma_{-v_{1}} \text { for $\epsilon \geq 0$.} 
\]
For every $\epsilon>0$ the curve $\gamma_{\epsilon}$ is
tree-reduced by Lemma \ref{tree-reduced}. On the other hand, $\gamma_{0}$ is tree-like equivalent to the constant path $o.$ It
follows that 
\begin{equation} \label{d lb2}
d\left( \left[ \gamma_{0}\right] ,\left[ \gamma_{\epsilon}\right] \right)
=\left\vert \left\vert \gamma_{\epsilon}\right\vert \right\vert _{1}\geq2,  
\end{equation}
while an easy calculation using (\ref{constant speed}) shows that $ 
\left\vert \left\vert \gamma_{\epsilon}-\gamma_{0}\right\vert \right\vert
_{1}\leq 6\epsilon.$
Hence any $\chi_{_{\tau}}$-neighbourhood of $\left[ o\right] $
must contain $\left[ \gamma_{\epsilon}\right] $ for some $\epsilon>0$. In
view of (\ref{d lb2}), the set $B_{d}\left( \left[ o\right] ,\delta\right) $ can
never contain such a neighbourhood whenever $\delta<2.$
\end{proof}
The inclusions $\chi_{_{\text{pr}%
}}\subset \chi_{\tau}\subset\chi_{d}$ and the fact that $\chi_{_{\text{pr}%
}}$ is the subspace topology of a Hausdorff space immediately yields the following corollary.
\begin{corollary}
All three candidate topologies are Hausdorff.
\end{corollary}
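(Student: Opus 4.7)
The plan is to deduce the corollary directly from Proposition \ref{topologies} and Lemma \ref{polish}, without any new construction. The central principle is the monotonicity of the Hausdorff property under refinement: if a topology $\chi$ on a set $X$ separates any two distinct points by disjoint open sets, then so does every topology finer than $\chi$, since those same separating sets remain open.

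First I would establish that $(\mathcal{C}_1, \chi_{\text{pr}})$ is Hausdorff. By Lemma \ref{polish}, the product space $\prod_{i=0}^\infty V^{\otimes i}$ equipped with the product topology is Polish, and in particular Hausdorff. The image $\mathcal{S}$ inherits the Hausdorff property as a subspace. Since the signature map $S : \mathcal{C}_1 \to \mathcal{S}$ is injective by Theorem \ref{HL} and, by construction of $\chi_{\text{pr}}$, a topological embedding, the induced topology on $\mathcal{C}_1$ is Hausdorff.

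Next, given any two distinct classes $[\gamma] \neq [\sigma]$ in $\mathcal{C}_1$, let $U, W \in \chi_{\text{pr}}$ be disjoint open sets with $[\gamma] \in U$ and $[\sigma] \in W$. By Proposition \ref{topologies} we have $\chi_{\text{pr}} \subset \chi_\tau \subset \chi_d$, so $U$ and $W$ are open in both $\chi_\tau$ and $\chi_d$, which witnesses the Hausdorff property for those two topologies as well. There is no real obstacle here; the corollary is essentially an observation, and the work was already done in proving the inclusion $\chi_{\text{pr}} \subset \chi_\tau \subset \chi_d$ and in Lemma \ref{polish}.
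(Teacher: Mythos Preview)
Your proposal is correct and follows exactly the same approach as the paper: the paper's proof is the single remark that $\chi_{\text{pr}}$ is the subspace topology of a Hausdorff space and the inclusions $\chi_{\text{pr}}\subset\chi_\tau\subset\chi_d$ then transfer the Hausdorff property upward. Your version simply unpacks this in slightly more detail.
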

\begin{proposition}\label{prop: group cts}
$(\mathcal{C}_{1},\chi_{_{\text{pr}}})$ with the operations $\left[ \gamma\right] \cdot\left[ \sigma\right] :=\left[\gamma\ast\sigma\right]$ and $\left[\gamma\right]^{-1}:=\left[\overleftarrow{\gamma}\right]$ forms a topological group. This is not the case for the metric topology.
\end{proposition}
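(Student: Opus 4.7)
The plan is to check that the operations descend to well-defined group operations on $\mathcal{C}_1$, establish continuity in $\chi_{\text{pr}}$ by translating everything through the embedding $S$, and then exhibit a concrete sequence witnessing discontinuity of multiplication in $\chi_d$.

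For well-definedness, if $\gamma\sim_\tau\gamma'$ and $\sigma\sim_\tau\sigma'$ then by (\ref{sig ops}) and Theorem \ref{HL}(2), $S(\gamma*\sigma)=S(\gamma)S(\sigma)=S(\gamma')S(\sigma')=S(\gamma'*\sigma')$, so $\gamma*\sigma\sim_\tau\gamma'*\sigma'$; an analogous argument handles inversion. The group axioms are immediate: $[o]$ is the identity, $[\overleftarrow{\gamma}]$ is the inverse of $[\gamma]$ because (\ref{sig ops}) yields $S(\gamma*\overleftarrow{\gamma})=\mathbf{1}=S(o)$, and associativity transfers from $T((V))$ through the injective signature.

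For continuity in $\chi_{\text{pr}}$, the crucial observation is that $S$ intertwines the group operations with tensor-algebra multiplication and inversion: $S([\gamma]\cdot[\sigma])=S(\gamma)S(\sigma)$ and $S([\gamma]^{-1})=S(\gamma)^{-1}$. Because $\chi_{\text{pr}}$ is by construction the pull-back via $S$ of the product topology, it suffices to check that these two operations are continuous on $\mathcal{S}$ with respect to componentwise convergence. For multiplication, $\pi_n(xy)=\sum_{k=0}^n \pi_k(x)\otimes\pi_{n-k}(y)$ is a finite algebraic combination of projections onto finite-dimensional tensor powers, hence continuous. For inversion on elements with $\pi_0=1$, the expansion $x^{-1}=\sum_{k\ge 0}(-1)^k(x-\mathbf{1})^k$ truncates at each level so that $\pi_n(x^{-1})$ is a polynomial in $\pi_1(x),\dots,\pi_n(x)$, and continuity follows.

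For the failure in $\chi_d$, I would reuse the family $\gamma_\epsilon=\gamma_{\epsilon v_2}*\gamma_{v_1}*\gamma_{-\epsilon v_2}*\gamma_{-v_1}$ from the proof of Proposition \ref{topologies} and split it as $\gamma_\epsilon=\alpha_\epsilon*\beta_\epsilon$ with $\alpha_\epsilon:=\gamma_{\epsilon v_2}*\gamma_{v_1}$ and $\beta_\epsilon:=\gamma_{-\epsilon v_2}*\gamma_{-v_1}$. For $\epsilon>0$ each of $\alpha_\epsilon$ and $\beta_\epsilon$ is tree-reduced by Lemma \ref{tree-reduced}, while $\alpha_0\sim_\tau\gamma_{v_1}$ and $\beta_0\sim_\tau\gamma_{-v_1}$. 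A direct computation of the $1$-variation of the difference of the constant-speed tree-reduced representatives gives $d([\alpha_\epsilon],[\alpha_0])=O(\epsilon)$ and similarly for $\beta_\epsilon$. On the other hand, $[\alpha_0]\cdot[\beta_0]=[\gamma_{v_1}*\gamma_{-v_1}]=[o]$ whereas $[\alpha_\epsilon]\cdot[\beta_\epsilon]=[\gamma_\epsilon]$ satisfies $d([\gamma_\epsilon],[o])\geq 2$ by (\ref{d lb2}), so multiplication is discontinuous at $([\alpha_0],[\beta_0])$ in $\chi_d$.

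The main obstacle is the slightly fiddly verification that $d([\alpha_\epsilon],[\alpha_0])\to 0$ despite the tree-reduced representatives having different lengths; this requires a short explicit calculation with the constant-speed parameterisations from (\ref{constant speed}) on the two subintervals where the derivative of $\alpha_\epsilon^\ast$ is constant. Everything else is bookkeeping given Lemma \ref{tree-reduced} and the estimate (\ref{d lb2}).
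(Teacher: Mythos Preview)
Your proposal is correct and follows essentially the same approach as the paper. For continuity in $\chi_{\text{pr}}$ both you and the paper reduce to the fact that each level of the tensor product and tensor inverse depends polynomially on finitely many lower levels; for discontinuity in $\chi_d$ your split $\gamma_\epsilon=\alpha_\epsilon*\beta_\epsilon$ is exactly the paper's construction $\rho_n=\gamma_{v_2/n}*\gamma_{v_1}$, $\sigma_n=\gamma_{-v_2/n}*\gamma_{-v_1}$ with $\epsilon=1/n$, and the paper records the bound $d([\rho_n],[\gamma_{v_1}])\le 3/n$ without further comment, confirming that your ``fiddly'' step is indeed routine.
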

We provide below a proof from first principles, however the continuity may also be seen from existing results in the literature. For example, by using that $\pi_n G^*$ is a closed Lie subgroup of the closed (with respect to the product topology) set $\pi_n\tilde{T}((V))$, where $\tilde{T}((V)):=\{\mathbf{a}\in T((V)):a_0=1\}$. The manifold topology of the Lie group is the same as the subspace topology of the product topology, for further details we refer the reader to \cite[Chapter 7]{FV}.
\begin{proof}
We first show continuity of the group operations on $(\mathcal{C}_1,\chi_{\text{pr}})$. For $\left[ \cdot\right] ^{-1}$, we define the following map on $\prod_{i=1}^\infty V^{\otimes i}$, which when restricted to $\mathcal{S}$ is the algebraic inversion:
\begin{align*}
\psi:\prod_{i=0}^\infty V^{\otimes i}&\longrightarrow \prod_{i=0}^\infty V^{\otimes i}\\
\mathbf{a} = (a_0,a_1,a_2,\dots)&\mapsto \mathbf{1} + \sum_{n=1}^\infty (0,-a_1,-a_2,\dots)^n.
\end{align*}
By definition of the subspace topology and the fact that $\mathcal{S}\subseteq\psi^{-1}\left(\mathcal{S}\right)$, it suffices to show continuity of $\psi$ on the tensor algebra. Moreover, by definition of the product topology it is enough to show continuity in each factor. Since the projection onto $V^{\otimes n}$ of $\psi(\mathbf{a})$ depends only on $\pi_n \mathbf{a}$, we may consider the factorisation
\[
\begin{tikzcd}
\prod_{i=0}^\infty V^{\otimes i} \arrow[d, "\pi_n"'] \arrow[rd, "\psi(\cdot)_n"] &               \\
\prod_{i=0}^n V^{\otimes i} \arrow[r, "\psi_n"']                                 & V^{\otimes n}
\end{tikzcd}
\]
Where $\psi_n(\mathbf{b}):=\psi\left(\pi_n^{-1}\mathbf{b}\right)_n$. By continuity of $\pi_n$ we only need to show continuity of $\psi_n$. This follows from the continuity of the tensor product, the canonical isomorphisms $V^{\otimes k}\otimes V^{\otimes l}\cong V^{\otimes k+l}$, and the continuity of addition. The continuity of group multiplication follows similarly.

We now prove the discontinuity of group multiplication with respect to the metric topology. Let $\{v_1,v_2\}$ be orthonormal vectors and define the two sequences of axis paths
\[
\rho_{n}=\gamma_{\frac{v_{2}}{n}}*\gamma_{v_{1}},\quad \sigma_n =\gamma_{-\frac{v_{2}}{n}}*\gamma_{-v_{1}} \text{ for $n \in \mathbb{N}$}.
\]

For each $n$, the path $\rho_n$ (\textit{resp.} $\sigma_n$) is tree reduced so that $\rho_n^* =\rho_n$ (\textit{resp.} $\sigma_n^*=\sigma_n$). Moreover the concatenation $\rho_n*\sigma_n$ is tree reduced and parameterised at constant speed. It follows that
\[
d([\rho_n]\cdot[\sigma_n]),[o])= \left\vert\left\vert \rho_n*\sigma_n-o\right\vert\right\vert_1\geq 2.
\]
On the other hand, for every $n$
\[
d([\rho_n],[\gamma_{v_1}])=d([\sigma_n], [\gamma_{-v_1}])\leq \frac{3}{n},
\]
and so $\rho_n$ (\textit{resp.} $\sigma_n$) converges to $[\gamma_{v_1}]$ (\textit{resp.} $[\gamma_{-v_1}]$) in $\chi_d$. But since $\gamma_{v_1}*\gamma_{-v_1}$ is tree-like, we have
\[
d([\gamma_{v_1}]\cdot[\gamma_{-v_1}]),[o])=d([o],[o])=0.
\]
Hence multiplication is not continuous with respect to $\chi_d$.
\end{proof}
\begin{remark}
The question of continuity of the group operations on $(\mathcal{C}_1, \chi_\tau)$ is of particular interest. Showing continuity would imply that the quotient topology is completely regular since every topological group is uniformisable (and every uniform space is completely regular).
\end{remark}

\subsection{Complete metrisability of candidate topologies?}
We start by answering this question in the negative for the product topology $(\mathcal{C}_{1},\chi_{\text{pr}})$.
\begin{proposition}\label{prop: Baire}
The product topology $(\mathcal{C}_{1},\chi_{\text{pr}})$ is not a Baire space.
\end{proposition}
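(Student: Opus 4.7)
The plan is to exhibit $\mathcal{C}_1$ as a countable union of closed nowhere dense sets, contradicting the Baire property. The natural filtration is by tree-reduced length: set
\[
B(n) := \{[\gamma] \in \mathcal{C}_1 : \|\gamma^*\|_1 \leq n\}, \quad n \in \mathbb{N},
\]
so that $\mathcal{C}_1 = \bigcup_{n \geq 1} B(n)$. Closedness of each $B(n)$ in $\chi_{\text{pr}}$ follows from its compactness (advertised in the introduction as one of the main results of the next section) together with the Hausdorff property already established.

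The heart of the argument is to show that each $B(n)$ has empty interior. Fix $[\gamma] \in B(n)$ and a basic $\chi_{\text{pr}}$-neighbourhood $V = \{[\sigma]: \|S_k(\sigma)-S_k(\gamma^*)\|_{V^{\otimes k}} < \epsilon,\ k = 1, \ldots, N\}$ of $[\gamma]$. I will produce a point of $V$ lying outside $B(n)$, using the axis-path sequence $(\Gamma_k)$ from Example \ref{LyonsXu}. The candidate is $[\gamma^* * \Gamma_k]$ for $k$ sufficiently large. First, Chen's identity $S(\gamma^* * \Gamma_k) = S(\gamma^*) S(\Gamma_k)$ together with the vanishing $S_m(\Gamma_k) = 0$ for $1 \leq m \leq k$ yield $S_m(\gamma^* * \Gamma_k) = S_m(\gamma^*)$ for every $m \leq k$; hence $[\gamma^* * \Gamma_k] \in V$ as soon as $k \geq N$. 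Second, if $\eta$ is any tree-reduced representative of $[\gamma^* * \Gamma_k]$, then
\[
S(\overleftarrow{\gamma^*} * \eta) = S(\gamma^*)^{-1} S(\gamma^* * \Gamma_k) = S(\Gamma_k),
\]
so $\overleftarrow{\gamma^*} * \eta \sim_\tau \Gamma_k$. Since $\Gamma_k$ is itself tree-reduced, the minimality property of Theorem \ref{HL} gives $\|\overleftarrow{\gamma^*} * \eta\|_1 \geq 2^{k+1}$, and additivity of $1$-variation under concatenation yields $\|\eta\|_1 \geq 2^{k+1} - \|\gamma^*\|_1$. Choosing $k \geq N$ large enough that $2^{k+1} - \|\gamma^*\|_1 > n$ exhibits $[\gamma^* * \Gamma_k] \in V \setminus B(n)$, so $[\gamma]$ is not in the interior of $B(n)$.

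The step I expect to require the most care is the closedness of $B(n)$ in $\chi_{\text{pr}}$. A direct argument is delicate because tree-reduced length is naturally expressed as a limsup of continuous functionals via the length--signature identity from Lemma \ref{tree-reduced}, and such limsups are not in general semicontinuous. The cleanest route is to invoke the (forthcoming) compactness of $B(n)$, which in turn can be established from the equicontinuity of constant-speed tree-reduced representatives with common Lipschitz constant together with the lower semicontinuity of $1$-variation.
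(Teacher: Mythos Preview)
Your proof is correct and follows the same overall architecture as the paper: write $\mathcal{C}_1=\bigcup_n B(n)$, obtain closedness of $B(n)$ from the forward-referenced compactness plus the Hausdorff property, and show each $B(n)$ has empty interior using the sequence $(\Gamma_k)$ of Example~\ref{LyonsXu}.

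The one genuine difference lies in the execution of the ``empty interior'' step. The paper first proves the general statement that \emph{every} nonempty $\chi_{\text{pr}}$-open set is $d$-unbounded: it shows this at $[o]$ using $(\Gamma_k)$ directly, and then transports the conclusion to an arbitrary $[\gamma]$ via the topological group structure established in Proposition~\ref{prop: group cts} (left translation by $[\gamma]^{-1}$ is a homeomorphism). Since each $B(n)$ is $d$-bounded, nowhere-denseness follows. You instead work directly at $[\gamma]$: you concatenate $\gamma^*$ with $\Gamma_k$, use Chen's identity and the vanishing of $S_m(\Gamma_k)$ for $m\le k$ to place $[\gamma^**\Gamma_k]$ in the given neighbourhood, and bound its tree-reduced length from below via the minimality property of $\Gamma_k$. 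Your route is slightly more elementary in that it bypasses any appeal to continuity of the group operations; the paper's route packages the same estimate more abstractly and yields the mildly stronger intermediate statement that all open sets are $d$-unbounded. Both arguments ultimately rest on the same two ingredients, namely the behaviour of $(\Gamma_k)$ and the compactness of $B(r)$.
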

\begin{proof}
The proof has two parts: we first show that every non-empty open set in $\chi_{\text{pr}}$ is unbounded with respect to the metric $d$, and then that $\mathcal{C}_{1}$ may be written as the countable union of closed sets that are bounded with respect to $d$.

Consider the two dimensional subspace of $V$ spanned by two orthonormal vectors $\{v_1,v_2\}$, and let $(\Gamma_k)_{k=1}^\infty$ be the sequence of axis paths defined in Example \ref{LyonsXu}. Each $\Gamma_k$ is tree reduced by Lemma \ref{tree-reduced}, thus 
\begin{equation*}
    d([o],[\Gamma_k])=2^{k+1}.
\end{equation*}
As shown in the proof of Proposition \ref{topologies}, $\left[\Gamma_k\right]$ converges to $[o]$ in $\chi_{\text{pr}}$, and so every open neighbourhood of $[o]$ contains infinitely many terms of the sequence $(\Gamma_k)_{k=1}^\infty$. Hence every open neighbourhood of $[o]$ is unbounded in $d$. Now let $[\gamma]\in\mathcal{C}_1$ and $U$ be any open neighbourhood of $\gamma$. Define the map
\begin{align*}
    \lambda: U&\longrightarrow \ \mathcal{C}_1\\
    [\sigma]&\longmapsto [\gamma]^{-1}\cdot[\sigma],
\end{align*}
which is continuous by Proposition \ref{prop: group cts} and bijective with its image $V:=\lambda(U)$. The inverse map
\begin{align*}
    \lambda^{-1}: V&\longrightarrow \ U\\
    [\sigma]&\longmapsto [\gamma]\cdot[\sigma],
\end{align*}
is also continuous by Proposition \ref{prop: group cts}. Therefore $\lambda(U)$ is an open neighbourhood of $[o]$. If $U$ were bounded in $d$ from $[o]$ by some constant $K>0$, then by construction of $\lambda$, the length of the tree-reduced representative of every $[\sigma]$ in $V$ is at most $2K$. This would imply that $V$ were bounded in $d$, a contradiction.

For any $r>0$, define the set 
\[
B\left( r\right) :=\left\{ \left[ \gamma\right] \in \mathcal{C}_{1} :\left\vert \left\vert \gamma^{\ast}\right\vert \right\vert _{1}\leq
r\right\}.  
\]
We will show later, in Proposition \ref{prop: B(r)}, that $B(r)$ is compact in $\chi_{\text{pr}}$, and so closed by the Hausdorff property. Since each $B(r)$ is bounded in $d$, it must have empty interior by the preceding. Writing
\begin{equation*}
 \mathcal{C}_1 = \bigcup_{r=1}^{\infty}B(r),
\end{equation*}
we see that $\mathcal{C}_1$ is the countable union of nowhere dense sets and so is not a Baire space.
\end{proof}
\begin{remark}
It can also be shown that every open set in $\chi_\tau$ is unbounded in $d$, though we do not include a proof here.
\end{remark}

As a consequence of the above, and the Baire Category Theorem, we obtain the following corollary.

\begin{corollary}
The product topology $(\mathcal{C}_{1},\chi_{\text{pr}})$ is not locally compact.
\end{corollary}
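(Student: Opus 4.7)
The plan is a direct appeal to the Baire Category Theorem in its locally compact Hausdorff version, which asserts that any locally compact Hausdorff space is automatically a Baire space. Since we have already established in the corollary following Proposition \ref{topologies} that $(\mathcal{C}_1,\chi_{\text{pr}})$ is Hausdorff, and in Proposition \ref{prop: Baire} that it fails to be a Baire space, local compactness is ruled out by contraposition.

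Concretely, I would proceed in the following order. First, I state (or cite from a standard reference) the version of BCT we need: in a locally compact Hausdorff space, the complement of a meagre set is dense, equivalently, the space is Baire. Second, I invoke the Hausdorff property of $(\mathcal{C}_1,\chi_{\text{pr}})$, which follows from Lemma \ref{polish} and the fact that $S$ is a topological embedding (already recorded in the corollary to Proposition \ref{topologies}). Third, I quote Proposition \ref{prop: Baire}, which exhibits $\mathcal{C}_1$ as a countable union of closed sets $B(r)$ each having empty interior in $\chi_{\text{pr}}$; this is incompatible with the Baire property. Fourth, I conclude that $(\mathcal{C}_1,\chi_{\text{pr}})$ cannot be locally compact.

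There is essentially no obstacle: the work has been done in Proposition \ref{prop: Baire}, and the present statement is purely a logical consequence. The only point that warrants a remark is that we cannot simply invoke the complete-metric form of BCT here, since $\chi_{\text{pr}}$ is not known (and indeed will be shown not) to be completely metrisable; the locally compact Hausdorff form is the relevant one. The total length of the argument should be a single sentence appealing to BCT together with the two facts just recalled.
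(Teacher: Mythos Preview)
Your proposal is correct and matches the paper's own argument exactly: the paper states the corollary as an immediate consequence of Proposition~\ref{prop: Baire} and the Baire Category Theorem, precisely the locally compact Hausdorff form you identify.
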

The following is one of the main results of this paper.

\begin{theorem}\label{thm: topologies}
The three topologies on the space $\mathcal{C}_{1}$ of unparameterised paths are separable and have the following properties.
\begin{enumerate}
    \item The product topology $(\mathcal{C}_{1},\chi_{\text{pr}})$ is metrisable, but not completely metrisable.
    \item The quotient topology $(\mathcal{C}_{1},\chi_{\tau})$ is not metrisable.
    \item The metric $d$ defined by $(\ref{metric})$ is not complete.
\end{enumerate}
\end{theorem}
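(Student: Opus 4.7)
For separability, the map $[\gamma] \mapsto (\gamma^\ast)'$ is an isometric embedding of $(\mathcal{C}_1, d)$ into the separable Banach space $L^1([0,1], V)$, so $(\mathcal{C}_1, \chi_d)$ is separable; because $\chi_{\text{pr}} \subset \chi_\tau \subset \chi_d$ by Proposition~\ref{topologies}, any countable $\chi_d$-dense set remains dense in the two coarser topologies.

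For (1), metrisability of $(\mathcal{C}_1, \chi_{\text{pr}})$ is immediate: by Lemma~\ref{polish}, $\mathcal{S}$ inherits a metrisable topology as a subspace of the Polish space $\prod_{i=0}^\infty V^{\otimes i}$, and $S$ is by definition a topological embedding. If $\chi_{\text{pr}}$ were completely metrisable, the Baire category theorem would force it to be a Baire space, contradicting Proposition~\ref{prop: Baire}.

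For (3), the plan is to exhibit a concrete $d$-Cauchy sequence with no limit. Set $\gamma_n := \gamma_{v_1} \ast \gamma_{v_2/n} \ast \gamma_{-v_1}$ for an orthonormal pair $v_1, v_2$; by Lemma~\ref{tree-reduced} each $\gamma_n$ is tree-reduced, so after constant-speed reparameterisation $\gamma_n = \gamma_n^\ast$. A direct computation of $\|\gamma_n^\ast - \gamma_m^\ast\|_1$ shows that $(\gamma_n^\ast)$ converges in 1-variation to the tree-like path $\gamma_\infty$ which traverses $v_1$ and then $-v_1$ at constant speed $2$; in particular $(\gamma_n^\ast)$ is Cauchy in the 1-variation norm. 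But $\gamma_\infty$ is tree-like with positive 1-variation, hence cannot equal $\sigma^\ast$ for any $[\sigma] \in \mathcal{C}_1$, and uniqueness of 1-variation limits then rules out any $d$-limit for $([\gamma_n])$.

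For (2), the plan is to show that $(\mathcal{C}_1, \chi_\tau)$ is not first-countable at $[o]$, whence it cannot be metrisable. Assume for contradiction a countable nested neighbourhood base $U_1 \supseteq U_2 \supseteq \dots$ at $[o]$. Each $\pi^{-1}(U_m)$ is open in $C_1$, saturated under $\sim_\tau$, and contains the tree-like loop $\tau_m := \gamma_{m v_1} \ast \overleftarrow{\gamma_{m v_1}}$, so there exists $r_m > 0$ with $B(\tau_m, r_m) \subseteq \pi^{-1}(U_m)$. Inside this ball, construct an axis path $\eta_m$ whose three consecutive orthogonal segments are $m v_1,\ \delta_m v_2,\ -m v_1$: Lemma~\ref{tree-reduced} makes $\eta_m$ tree-reduced with $\|\eta_m^\ast\|_1 = 2m + \delta_m$, and by parameterising the middle segment over a short time window of width of order $r_m/m$ around the turning time $t = 1/2$ of $\tau_m$, with $\delta_m$ of order $r_m$, one obtains $\|\eta_m - \tau_m\|_1 < r_m$. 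Because $\|\eta_m^\ast\|_1 \to \infty$, the argument of Proposition~\ref{topologies} applies verbatim to show that $A := \{[\eta_m] : m \geq 1\}$ is $\chi_\tau$-closed; since $[o] \notin A$, the set $V := \mathcal{C}_1 \setminus A$ is a $\chi_\tau$-open neighbourhood of $[o]$. But by construction $[\eta_m] \in U_m \cap A$, so $U_m \not\subseteq V$ for any $m$, contradicting $\{U_m\}$ being a base. The main obstacle is effecting the joint construction of $\eta_m$ with both a large tree-reduced length and a small 1-variation distance to $\tau_m$; this relies crucially on the fact that the tree-like equivalence class of $[o]$ contains paths of unbounded 1-variation.
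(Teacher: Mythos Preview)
Your proposal is correct and follows essentially the same approach as the paper, with the same axis-path constructions (tree-reduced perturbations of tree-like loops of growing length) driving items (2) and (3), and item (1) handled identically via Proposition~\ref{prop: Baire} and the Baire category theorem. The only cosmetic differences are your direct non-first-countability framing in (2) (the paper phrases the same contradiction as ``first-countable $\Rightarrow$ not regular''), your use of three-segment rather than four-segment axis paths in (2) and (3), and your $L^{1}$-embedding $[\gamma]\mapsto(\gamma^{\ast})'$ for separability in place of the paper's appeal to the separability of absolutely continuous paths.
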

\begin{proof}
We will show separability of $\chi_{d}$ and use the inclusions $\chi_{_{\text{pr}%
}}\subset \chi_{\tau}\subset\chi_{d}$ to conclude separability of $\chi_{_{\text{pr}}}$ and $\chi_\tau$. By \cite[Proposition 1.31, Corollary 1.35]{FV}, the space of absolutely continuous paths with respect to $\left\vert\left\vert\cdot\right\vert\right\vert_1$ is separable. Consider the subspace $A$ of $C_1$ of tree-reduced paths parameterised at constant speed; that is all representatives of equivalence classes seen by the metric $d$. Since every path parameterised at constant speed is Lipshitz, we may consider $A$ as a subspace of absolutely continuous paths. Since any subspace of a separable metric space is separable, $A$ is separable with respect to $\left\vert\left\vert\cdot\right\vert\right\vert_1$. This is equivalent to the separability of $\chi_{d}$.

Item 1 is a consequence of the definition of $\chi_{\text{pr}}$ and Proposition \ref{prop: Baire}. The space $\prod_{i=0}^{\infty}V^{\otimes i}$ with the product topology is a Polish space. It follows that $\chi_{\text{pr}}$ coincides with the metric topology of the restriction of any metric which  generates the product topology on $\prod_{i=0}^{\infty}V^{\otimes i}$. Since $\mathcal{C}_1$ with the product topology is not a Baire space, the Baire Category Theorem implies no metric generating $\chi_{\text{pr}}$ is complete.

 We will prove item 2 using the fact that any metrisable space must be both first countable and regular \cite{N}. By assuming first-countability we show regularity cannot hold.  Thus we assume that there exists $\left\{
U_{i}\right\}_{i=1}^{\infty} $ a countable neighbourhood basis of $[o]$ which, without loss of generality, is assumed to
consist of open sets. As before, we consider sequences of axis paths in a two-dimensional subspace of $V$ defined relative to two orthonormal vectors $\{v_{1},v_{2}\}$. The first sequence will consist of tree-like paths; for every $n \in \mathbb{N}$ we take \[\gamma_{n}:=\gamma_{nv_{1}}*\gamma_{-nv_{1}}= \gamma_{nv_{1}}*\overleftarrow{\gamma_{nv_{1}}}.
\]
Every $\gamma_{n}$ belongs to the equivalence class $[o]$ and hence
\[
r_{n}=\sup\left\{ r>0:B\left( \gamma_{n},r\right) \subset\pi^{-1}\left(
U_{n}\right) \right\}>0.
\]
We introduce a strictly decreasing sequence of positive real numbers $\{a_{n}\}_{n=1}^{\infty}$ by 
\[
a_{1}:=\frac{1}{2}r_{1},\text{ }a_{n+1}=\frac{1}{2}\min\left\{
r_{n+1},a_{n}\right\} \text{ for }n=2,3,\dots 
\]
so that $\pi\left( B\left( \gamma_{n},a_{n}\right)
\right) \subset U_{n}$ for every $n$. We let $\Gamma_{n}$ be the path
\[
\Gamma_{n}=\gamma_{\epsilon_{n}v_{2}}*\gamma_{nv_{1}}*\gamma_{-\epsilon_{n}v_{2}}*\gamma_{-nv_{1}}, \text{ where }
\\
\epsilon_{n}=\frac{a_{n}}{6}
\]
For every $n$, the path $\Gamma_{n}$ is tree-reduced by Lemma \ref{tree-reduced} and has length $\left\vert \left\vert \Gamma_{n}\right\vert \right\vert _{1}=2n+2\epsilon_{n}$. By the same argument in the proof of Proposition \ref{topologies} these properties yields that $F=\cup_{n=1}^{\infty}\left\{\left[ \Gamma_{n}\right]\right\} $ is closed in $(\mathcal{C}_{1},\chi_{_{\tau}})$. It is also clear that $\left[ o\right] \notin F$. On the other hand, we can obtain the estimate
\begin{equation}\label{bound1}
     \left\vert \left\vert \Gamma_{n}-\gamma_{n}\right\vert \right\vert
_{1} < 6\epsilon_{n}=a_{n}.
\end{equation}
The collection $\{U_{n}\}_{n=1}^{\infty}$ is a neighbourhood basis at $[o]$, and therefore any set $U$ in $%
\chi_{_{\tau}}$ which contains $[o]$ must contain $U_{k}$ as a subset for
some $k.$ Using the estimate (\ref{bound1}) above and the definition of the sequence $\{a_{n}\}_{n=1}^{\infty}$ we have that $\Gamma_{k}\in B\left(
\gamma_{k},a_{k}\right) $ and therefore $\left[ \Gamma_{k}\right] \in U_{k}.$
In other words, we have shown that any neighbourhood of $\left[ o\right] $ must have
non-empty intersection with $F$ and therefore $(\mathcal{C}_{1%
},\chi_{_{\tau}})$ cannot be regular.

To prove the final item we define another sequence of paths
\[
\rho_{n}=\gamma_{\frac{v_{2}}{n}}*\gamma_{v_{1}}*\gamma_{-\frac{v_{2}}{n}}*\gamma_{-v_{1}} \text{ for $n \in \mathbb{N}$}.
\]
Again $\rho_{n}$ is tree-reduced, i.e. $\rho^{*}_{n}=\rho_{n}$, and there exists a constant $c>0$ such that for any $1\leq n<m$ we have 
\[
d([\rho_{n}],[\rho_{m}])=\left\vert \left\vert \rho_{n}-\rho_{m}\right\vert \right\vert_1 \leq \frac{c}{n}
\]
so that $([\rho_{n}])_{n=1}^{\infty}$ is a $d$-Cauchy sequence. However, because $S_{k}(\rho_{n})\rightarrow 0$ as $n \rightarrow \infty$ for every $k\in \mathbb{N}$ and since $\chi_{\text{pr}} \subset \chi_{d}$, the only possible limit point of the sequence is $[o]$. This cannot happen owing to 
\[
d([o],[\rho_{n}]) \geq 2,
\]
which holds for all $n$.
\end{proof}

As mentioned in the introduction, several recent references, see for example \cite{BLO, persistence, LG}, state metrisability as an assumption. This premise may be traced to \cite{CO}, which itself derives from an application of Theorems 2.4, 3.1, and 4.6 of Giles in \cite{Giles}. The results of Giles are refashioned as Theorem 2.6 in \cite{CO} to include metrisability as a condition. A careful examination of the underlying reference, shows that metrisability is not necessary and, indeed, that the first two assertions of Theorem 2.6 hold for any topological space. The third and final assertion of Theorem 2.6 relies on Theorem 4.6 of \cite{Giles}, which is proved under the hypothesis that the space is completely regular. It is thus of interest to know whether the quotient topology has this property (or, equivalently, if it is uniformisable). With respect to this question, our proof of non-metrisability of the quotient topology does not yield an answer, except that first countability would then imply a lack of (complete) regularity. Alternatively, since $\chi_\tau$ is Hausdorff, local compactness would imply complete regularity.

The non-metrisability of the quotient topology may present challenges when using it for advanced applications in probability and stochastic analysis. As pointed out in the monograph \cite{G}, many probability measures in practice are Borel measures on Polish spaces. This assumption greatly simplifies the analysis in many applications by circumventing the complexities of working with general topological spaces. The development of theory reflects this. Useful tools are available in this setting including the facts that finite Borel measures on Polish spaces are tight (Ulam's Theorem), any Borel measurable function on such a space is continuous on a large compact set (Lusin's Theorem), and a family of Borel measures is tight if and only if it is relatively compact in the topology of weak convergence (Prokhorov's Theorem). In the case of the product topology $\chi_{\text{pr}}$, Ulam's Theorem, Lusin's Theorem, and one direction of Prokhorov's Theorem still hold. In the next section we illustrate how these  methods can be used to support an analysis of the expected signature model of \cite{LLN}.

Nevertheless, it may still be desirable to identify a Polish topology on $\mathcal{C}_{1}$. The previous theorem excludes $ \chi_{\tau}$ as a possibility, while  $\chi_{\text{pr}}$ can be metrised by using any metric which induces the topology on the product space. Taking the completion of $\mathcal{C}_{1}$ with respect to any such metric is a direct way to generate a Polish space. Alternatively, the completion of $\mathcal{C}_1$ with respect to the metric $d$ will also generate a Polish space, should a stronger topology be desired. An interesting question we have not answered is whether $\chi_d$ is completely metrisable.
\begin{proposition} \label{polishsp}
Let $\rho$ be any metric on that induces the product topology on $\prod_{i=0}^{\infty}V^{\otimes i}$, and let $\rho\vert_{\mathcal{S}}$ denote its restriction to $\mathcal{S}$. Then the set of group-like elements $G^{*}$ equipped with the metric $\rho\vert_{G^*}$ is a valid $\rho\vert_{\mathcal{S}}$-completion of $(\mathcal{C}_{1},\chi_{\text{pr}})$.
\end{proposition}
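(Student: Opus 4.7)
The plan is to verify the two defining conditions of a metric completion: density of the original space and completeness of the ambient space. Since the signature map $S$ realises $(\mathcal{C}_{1},\chi_{\text{pr}})$ as the metric subspace $(\mathcal{S},\rho|_{\mathcal{S}})$ of $\bigl(\prod_{i=0}^{\infty}V^{\otimes i},\rho\bigr)$, the task reduces to showing (i) $\mathcal{S}$ is $\rho$-dense in $G^{*}$, and (ii) $(G^{*},\rho|_{G^{*}})$ is a complete metric space. The overall structure is then that the universal property of the completion, combined with Lemma \ref{polish}, identifies it with $G^{*}$ up to canonical isometry.

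Item (i) is handed to us by Lemma \ref{polish}, which already asserts $\bar{\mathcal{S}}=G^{*}$ in the product topology on $\prod_{i=0}^{\infty}V^{\otimes i}$. Because $\rho$ generates this topology by hypothesis, the $\rho$-closure of $\mathcal{S}$ coincides with its closure in the product topology, and restricting to $G^{*}$ preserves this. Hence every $\mathbf{x}\in G^{*}$ is the $\rho$-limit of a sequence in $\mathcal{S}$, giving density.

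For item (ii), I would invoke that $\prod_{i=0}^{\infty}V^{\otimes i}$ is Polish (Lemma \ref{polish}) and therefore admits a compatible complete metric; one then assumes $\rho$ to be such a complete metric, which is the natural reading and is the only substantive hypothesis needed. Since $G^{*}$ is closed in $\prod_{i=0}^{\infty}V^{\otimes i}$, again from Lemma \ref{polish}, the restricted metric $\rho|_{G^{*}}$ is complete by the standard fact that a closed subset of a complete metric space is complete. Combining this with density from step (i) identifies $(G^{*},\rho|_{G^{*}})$ as a completion of $(\mathcal{S},\rho|_{\mathcal{S}})$, and hence of $(\mathcal{C}_{1},\chi_{\text{pr}})$ via the signature embedding. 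The argument is essentially a direct corollary of Lemma \ref{polish}, so no serious obstacle is expected; the only delicate point is recognising that the statement implicitly treats $\rho$ as a complete generating metric, since the abstract metric completion depends on the metric and not merely the underlying topology.
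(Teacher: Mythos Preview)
Your proposal is correct and follows exactly the route the paper takes: the paper's proof is the single line ``This follows more or less immediately from Lemma~\ref{polish},'' and you have simply unpacked that reference into its two constituents, density ($\bar{\mathcal{S}}=G^{*}$) and completeness ($G^{*}$ closed in a complete metric space). Your remark that the argument tacitly requires $\rho$ to be a \emph{complete} metric generating the product topology is well observed and in fact more careful than the paper's own formulation; without that assumption a $\rho$-Cauchy sequence in $G^{*}$ need not converge, so $(G^{*},\rho|_{G^{*}})$ would not be a completion.
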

\begin{proof}
This follows more or less immediately from Lemma \ref{polish}.
\end{proof}
\begin{definition} \label{comp}
We denote the topological space obtained in Proposition \ref{polishsp} by $\left(\bar{\mathcal{C}_{1}},\chi_{\text{pr}}\right)$, and the $d-$completion by $\left(\bar{\mathcal{C}_{1}},\chi_{d}\right)$.
\end{definition}
In contrast to $(\bar{\mathcal{C}_{1}},\chi_{\text{pr}})$, it is not clear whether there is a natural space with which to identify the $d-$completion of $(\mathcal{C}_{1},\chi_{d})$.

\section{Uniform approximation, linear regression and the Expected Signature Model}\label{sec: diffeq}
The contemporary use of the signature in applications in regression analysis, and its wider use in statistical and machine learning applications, is in large part underpinned by the following fundamental result.
\begin{theorem}[The Fundamental Theorem of Uniform Approximation by the Signature, \cite{LLN}]\label{found}
Let $(\mathcal{C}_{1},\chi)$ be the space of unparameterised paths equipped with a topology $\chi$ for which $S:\mathcal{C}_1\to T((V))$ is continuous. Let $\Phi:\mathcal{C}_{1}\rightarrow \mathbb{R}$ be a continuous function on a compact subset $K$ of $\mathcal{C}_{1}$. Then for every $\epsilon>0$, there exists a linear functional $L$ on $T((V))$ such that 
\[
\sup_{[\gamma] \in \mathcal{C}_{1}}\left\vert \Phi([\gamma])-L(S([\gamma]))\right\vert < \epsilon.
\]
\end{theorem}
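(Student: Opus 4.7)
The plan is to apply the Stone--Weierstrass theorem to the family
\[
\mathcal{A} := \{ L \circ S : L \text{ is a continuous linear functional on } T((V)) \},
\]
viewed as a subset of $C(K,\mathbb{R})$ with the uniform norm. (I shall assume the supremum in the conclusion is over $K$, which is the natural reading in view of the compactness hypothesis and the continuity assumption for $\Phi$.) First I would verify $\mathcal{A} \subseteq C(K,\mathbb{R})$: any continuous linear functional on $T((V))$ equipped with the product topology factors through some finite truncation $\pi_n$, and so is continuous; composing with $S$, which is continuous on $\mathcal{C}_{1}$ by hypothesis on $\chi$, the restriction to $K$ is continuous.

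Next I would check that $\mathcal{A}$ is a unital subalgebra of $C(K,\mathbb{R})$ that separates points of $K$. Constants are produced by $L = \pi_{0}$, which sends every signature to $1$, and closure under addition and scalar multiplication is immediate from linearity. Point separation is a direct consequence of the injectivity of $S$ on $\mathcal{C}_{1}$ recalled in Theorem \ref{HL}: distinct classes $[\gamma] \neq [\sigma]$ produce distinct elements of $T((V))$, which are then distinguished by a coordinate functional (the product topology being Hausdorff).

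The essential and non-formal step is closure under pointwise multiplication. Here I would invoke the classical shuffle product identity for signatures: if we view linear functionals on $T((V))$ as elements of the tensor algebra $T(V^{*})$ over the dual, then for any two such functionals $\ell_{1}$ and $\ell_{2}$ there is a third, their shuffle product $\ell_{1} \shuffle \ell_{2} \in T(V^{*})$, satisfying
\[
\ell_{1}(S([\gamma]))\,\ell_{2}(S([\gamma])) \;=\; (\ell_{1} \shuffle \ell_{2})(S([\gamma])) \qquad \text{for every } [\gamma] \in \mathcal{C}_{1}.
\]
This is the main obstacle in the sense that it is the only non-trivial algebraic ingredient, and it depends on the Hopf-algebraic structure of the tensor algebra rather than on anything topological; it can be quoted from the standard reference \cite{reut}.

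With these four properties in hand, the Stone--Weierstrass theorem applied to the compact Hausdorff space $K$ guarantees that $\mathcal{A}|_{K}$ is uniformly dense in $C(K,\mathbb{R})$. Given $\epsilon>0$ and the continuous function $\Phi|_{K}$, this produces a linear functional $L$ on $T((V))$ (in fact depending only on finitely many tensor levels) with $\sup_{[\gamma] \in K}|\Phi([\gamma]) - L(S([\gamma]))| < \epsilon$, completing the proof.
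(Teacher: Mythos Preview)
The paper does not supply its own proof of this theorem; it is quoted from \cite{LLN} and treated as background for the subsequent discussion. Your Stone--Weierstrass argument is correct and is precisely the standard proof one finds in that literature: the shuffle identity gives closure under multiplication, injectivity of $S$ on $\mathcal{C}_{1}$ (Theorem~\ref{HL}) gives point-separation, and the degree-zero projection provides the constants. One small point you assert without justification is that $K$ is Hausdorff, which Stone--Weierstrass requires; this follows because $S$ is by hypothesis a continuous injection into the Hausdorff space $T((V))$, so preimages under $S$ of separating open sets separate any two distinct points of $(\mathcal{C}_{1},\chi)$. Your reading of the supremum as ranging over $K$ rather than all of $\mathcal{C}_{1}$ is also the intended one.
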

A version of this theorem has appeared in various guises in earlier work, the first we believe being \cite{LLN}. The choice of topology is however rarely addressed explicitly; an exception, as noted earlier, is the paper \cite{CO} where the quotient topology is selected. The matter of which space to work with in the context of Theorem \ref{found} seems to merit some consideration. Our aim is not to advocate for any particular choice, but to audit our selected choice of candidates. We will prioritise three questions:
\begin{enumerate}
    \item How readily can compact subspaces of the chosen topology be found? 
    \item Given a compact subspace, how easy is it to exhibit continuous functions on these subspaces? 
    \item To what extent do these pairs of sets (compact subspaces and their set of continuous functions) relate to the practical use of Theorem \ref{found}?
\end{enumerate}
We address these points below. Our focus will mainly be on the product topology where it is simplest to provide positive answers to these questions.
\subsection{Compactness}
The inclusion maps $\iota: (\mathcal{C}_{1}, \chi_{\tau})\rightarrow (\mathcal{C}_{1},\chi_{\text{pr}
})$  and $\iota: (\mathcal{C}_{1},\chi_{d}) \rightarrow (\mathcal{C}_{1}, \chi_{\tau})$ are continuous by Proposition \ref{topologies}; compact subsets of $(\mathcal{C}_{1},\chi_{\text{pr}
})$ will therefore be at least as plentiful as for the alternatives. The next result describes some explicit examples of such sets.   
\begin{proposition}\label{prop: B(r)}
For any $r>0$, the set 
\[
B\left( r\right) =\left\{ \left[ \gamma\right] \in \mathcal{C}_{1} :\left\vert \left\vert \gamma^{\ast}\right\vert \right\vert _{1}\leq
r\right\}  
\]
is a compact subspace of $(\mathcal{C}_{1}, \chi_{\text{pr}
})$.  In particular, $(\mathcal{C}_{1}, \chi_{\text{pr}
})$ is $\sigma$-compact.
\end{proposition}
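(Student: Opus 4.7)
The plan is to prove compactness of $B(r)$ by working through the embedding $S$: since $\chi_{\text{pr}}$ is by definition the subspace topology pulled back from $\mathcal{S} \subset \prod_{i=0}^\infty V^{\otimes i}$ with the product topology, it is enough to show that $S(B(r))$ is a compact subset of $\prod_{i=0}^\infty V^{\otimes i}$. I would obtain this by exhibiting $S(B(r))$ as a \emph{closed} subset of a Tychonoff product of compact balls.

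For the confinement step, I would invoke the factorial estimate $\left\vert \left\vert S_n(\gamma)\right\vert\right\vert_{V^{\otimes n}} \leq \left\vert\left\vert\gamma\right\vert\right\vert_1^n / n!$ from \cite[Proposition 2.2]{SaintFlour}, applied to the tree-reduced representative $\gamma^*$ with $\left\vert\left\vert\gamma^*\right\vert\right\vert_1 \leq r$. This places $S_n([\gamma])$ in the closed ball $K_n := \bar B(0, r^n/n!) \subset V^{\otimes n}$ for every $n$ and every $[\gamma] \in B(r)$, so $S(B(r)) \subset \prod_n K_n$; the right-hand side is compact by Tychonoff's theorem.

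For the closedness step, I would exploit the metrisability of $\prod_i V^{\otimes i}$ from Lemma \ref{polish} and work sequentially. Given a sequence $([\gamma_k])_{k=1}^\infty$ in $B(r)$ with $S([\gamma_k]) \to \mathbf{x}$ in the product topology, the family $\{\gamma_k^*\}$ is, by virtue of constant-speed parameterisation of magnitude at most $r$, equi-Lipschitz and uniformly bounded on $[0,1]$. Arzelà--Ascoli then yields a subsequence $\gamma_{k_j}^*$ converging uniformly to a limit $\gamma$ that is itself Lipschitz with constant at most $r$, and hence $\gamma\in C_1$ with $\left\vert\left\vert\gamma\right\vert\right\vert_1 \leq r$. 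The crucial step is to argue $S(\gamma_{k_j}^*) \to S(\gamma)$ in the product topology: uniform convergence combined with a uniform $1$-variation bound upgrades via a standard interpolation inequality to convergence in $p$-variation for any $p>1$, and then the continuity of $S^{(n)}: C_p \to \prod_{i=0}^n V^{\otimes i}$ (cited after Notation 2.4) for a choice of $p \in (1,2)$ transfers the convergence to every truncated signature. Comparing with the assumed limit gives $\mathbf{x} = S(\gamma)$, and since tree-reduction can only decrease length, $\left\vert\left\vert\gamma^*\right\vert\right\vert_1 \leq \left\vert\left\vert\gamma\right\vert\right\vert_1 \leq r$, so $[\gamma] \in B(r)$ and $\mathbf{x} \in S(B(r))$.

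Combining the two ingredients, $S(B(r))$ is a closed subset of a compact set, hence compact, and therefore $B(r)$ is compact in $\chi_{\text{pr}}$. The $\sigma$-compactness then follows immediately from the decomposition $\mathcal{C}_1 = \bigcup_{n=1}^\infty B(n)$. The main obstacle I anticipate is the signature-continuity step: the continuity result cited in the paper is phrased in terms of $p$-variation rather than uniform convergence, so the interpolation inequality bridging those two notions is the indispensable non-trivial input — everything else is a compactness argument in the ambient product space.
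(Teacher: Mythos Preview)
Your proposal is correct and follows essentially the same route as the paper's proof: both rely on Arzel\`a--Ascoli applied to the constant-speed tree-reduced representatives (equi-Lipschitz with constant $r$), followed by the interpolation inequality to upgrade uniform convergence to $p$-variation convergence, and finally continuity of the truncated signature in $p$-variation. The only cosmetic difference is the wrapper: the paper argues sequential compactness of $B(r)$ directly (metrisability $\Rightarrow$ sequential compactness suffices), whereas you phrase it as ``closed subset of a Tychonoff product of balls''; your Tychonoff confinement step is harmless but in fact dispensable, since your closedness argument already produces the convergent subsequence via Arzel\`a--Ascoli without appeal to compactness of $\prod_n K_n$.
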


\begin{proof}
The product topology is metrisable and hence it suffices to prove sequential
compactness. Let $\left( \left[ \gamma_{n}\right] \right) _{n=1}^{\infty}$
be a sequence in $B\left( r\right) $ then, by definition of the path $%
\gamma^{\ast}$ and the set $B\left( r\right) ,$ the sequence $\left(
\gamma_{n}^{\ast}\right) _{n=1}^{\infty}$ satisfies%
\begin{equation}
\left\vert \gamma_{n}^{\ast}\left( t\right) -\gamma_{n}^{\ast}\left(
s\right) \right\vert \leq r\left( t-s\right) \text{ for all }s\leq t\text{
in }\left[ 0,1\right] \text{ and }n\in%
\mathbb{N}
;   \label{equi}
\end{equation}
that is, the paths are equicontinuous. The Arzela-Ascoli theorem gives a
uniformly convergent subsequence which we again call $\left(
\gamma_{n}^{\ast}\right) _{n=1}^{\infty}$ for convenience. We write $\gamma$
for the limit and note from (\ref{equi}) that $\left\vert \left\vert
\gamma\right\vert \right\vert _{1}\leq r$ and therefore $\left\vert
\left\vert \gamma^{\ast}\right\vert \right\vert _{1}\leq r$ so that $\left[
\gamma\right] $ is in $B\left( r\right) .$ Let $1<p<2,$ then a standard
inequality gives 
\[
\left\vert \left\vert \gamma_{n}^{\ast}-\gamma_{m}^{\ast}\right\vert
\right\vert _{p}\leq 2r ^{1/p}\left\vert \left\vert \gamma
_{n}^{\ast}-\gamma_{m}^{\ast}\right\vert \right\vert _{\infty}^{1-1/p} 
\]
allowing us to conclude that $\left( \gamma_{n}^{\ast}\right) _{n=1}^{\infty }
$ is a Cauchy sequence and thus convergent to $\gamma$ in $p $-variaton. For
each $m$ the map%
\[
C_{p}\ni\sigma\mapsto S_{m}\left( \sigma\right) \in V^{\otimes m} 
\]
is continuous and it therefore holds that $S_{m}\left( \gamma_{n}^{\ast
}\right) \rightarrow S_{m}\left( \gamma\right) $ as $%
n\rightarrow\infty$. We have shown that $\left[ \gamma_{n}\right] \rightarrow%
\left[ \gamma\right] $ in $B\left( r\right) $ as $n\rightarrow \infty$ in
the product topology.
\end{proof}
\begin{corollary}
The range of the signature $\mathcal{S}$ is measurable with respect to the Borel sigma-algebra on $G^*$.
\end{corollary}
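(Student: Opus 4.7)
The plan is to exhibit $\mathcal{S}$ as an $F_\sigma$ subset of $G^{*}$, which will immediately give Borel measurability. The starting observation is that the signature map $S:(\mathcal{C}_{1},\chi_{\text{pr}})\to G^{*}$ is, by the very definition of $\chi_{\text{pr}}$, a topological embedding onto its image; in particular it is continuous when $G^{*}$ is equipped with the subspace topology inherited from $\prod_{i=0}^{\infty}V^{\otimes i}$.

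First I would use $\sigma$-compactness from Proposition \ref{prop: B(r)} to write
\[
\mathcal{C}_{1}=\bigcup_{r=1}^{\infty}B(r),
\]
so that
\[
\mathcal{S}=S(\mathcal{C}_{1})=\bigcup_{r=1}^{\infty}S(B(r)).
\]
Since $B(r)$ is compact in $\chi_{\text{pr}}$ and $S$ is continuous into $G^{*}$, each $S(B(r))$ is compact in $G^{*}$. The space $G^{*}$ inherits the Hausdorff property from the Polish ambient space $\prod_{i=0}^{\infty}V^{\otimes i}$ by Lemma \ref{polish}, so each $S(B(r))$ is in fact closed in $G^{*}$.

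Thus $\mathcal{S}$ is a countable union of closed sets, i.e.\ an $F_{\sigma}$ set, and in particular a Borel subset of $G^{*}$. There is no real obstacle here; the only point worth flagging is the need to invoke Hausdorffness of the ambient product topology (via Lemma \ref{polish}) to upgrade ``compact image'' to ``closed in $G^{*}$'', since the conclusion that $\mathcal{S}$ is Borel relies on this closedness rather than merely on countable unions of compact sets being Borel.
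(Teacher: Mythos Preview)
Your proof is correct and follows essentially the same approach as the paper: both use Proposition \ref{prop: B(r)} to write $\mathcal{S}$ as a countable union of compact (hence closed) subsets of $G^{*}$. The only cosmetic difference is that the paper invokes metrisability of $G^{*}$ rather than Hausdorffness to pass from compact to closed, and phrases things via the inclusion $\iota:\mathcal{S}\to G^{*}$ rather than the embedding $S$.
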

\begin{proof}
The inclusion map $\iota:\mathcal{S}\to G^*$ is continuous, and so $B(r)$ is a compact subset of $G^*$ for every $r>0$. Since $G^*$ is metrisable, each $B(r)$ is closed and measurable. Thus $\mathcal{S}$ is a countable union of measurable sets, hence it too is measurable.
\end{proof}
\begin{remark}
It is not difficult to exhibit compact subsets $K\subset\mathcal{C}_{1}$ which are
not contained in any $B\left( r\right) ,$ for $r>0.$ For instance, by using
the example of Proposition \ref{topologies}, the set $%
K=\cup_{k=1}^{\infty}\left[ \Gamma_{k}\right] \cup\lbrack o]$ is compact
since any (non-trivial) sequence has a subsequence converging to $[o]$ in the product topology. On the other
hand, each of the tree-reduced paths $\Gamma_{k}$ has length $2^{k+1}.$
\end{remark}

An important example of a function on $\mathcal{C}_{1}$ is the
soluton of an ordinary differential equation. Suppose that $W$ is a
finite-dimensional vector space. Let $\mathcal{T}\left( W\right) $ denote
the space of smooth vector fields on $W$ and suppose that $f:V\rightarrow 
\mathcal{T}\left( W\right) $ is linear, then we can solve uniquely the
differential equation%
\begin{equation}
dy_{t}=f_{d\gamma_{t}}\left( y_{t}\right),\ \text{started at }y_{0}. 
\label{ode}
\end{equation}
To relate this to the signature we introduce the following notation.

\begin{notation}
Let $f:V\rightarrow\mathcal{T}\left( W\right) $ be linear and let $I:W\to W$ be the identity map. Let $D\left(
W\right) $ denote the space of smooth differential operators on $W$ and for $%
k\in%
\mathbb{N}
$ let $f^{\left( k\right) }:V^{\otimes k}\rightarrow D\left( W\right) $
be the unique linear map which is determined by 
\[
v_{1}\dots v_{k}\mapsto f_{v_{1}\dots v_{k}}^{\left( k\right) }\varphi\left( y\right)
:=\left. f_{v_{k}}\left( \cdot\right) \circ\dots\circ f_{v_{1}}\left(
\cdot\right)\varphi(\cdot) \right\vert _{\cdot=y}, 
\]
for a suitable class of smooth test functions $\varphi$.
\end{notation}

\begin{remark}
The right-hand side is a $k^{\text{th}}$-order differential operator. If $%
\left( y_{1},\dots,y_{n}\right) $ denotes coordinates on $W$ and $f_{v}\left(
y\right) =f_{v}^{i}\left( y\right) \partial_{i}$ then the notation $%
f_{v_{1}\dots v_{k}}^{\left( k\right) }\varphi\left( y\right) $ condenses the
otherwise unwieldy expression 
\begin{equation}
f_{v_{1}\dots v_{k}}^{\left( k\right) }\varphi\left( y\right) =f_{v_{k}}^{i_{k}}\left(
y\right) \partial_{i_{k}}\left( f_{v_{k-1}}^{i_{k-1}}\left( y\right)
\partial_{i_{k-1}}\left( \dots\left( f_{v_{1}}^{i_{1}}
\partial_{i_{1}}\varphi(y)\right) \right) \right).   \label{condense}
\end{equation}
\end{remark}

We can prove the following.

\begin{proposition}\label{prop: ode}
Suppose that $W$ is a finite-dimensional vector space. Let $f:V\rightarrow 
\mathcal{T}\left( W\right) $ be linear and assume that there exists $C<\infty
$ such that for every $k\in%
\mathbb{N}
$ 
\begin{equation}
\sup_{y\in W}\left\vert\left\vert f^{\left( k\right) }I\left( y\right) \right\vert\right\vert\leq C^{k}, 
\label{derivative bound}
\end{equation}
where $\left\vert \left\vert f^{\left( k\right) }I\left( y\right) \right\vert
\right\vert $ denotes the operator norm of the linear map $f^{\left(
k\right) }I\left( y\right) .$ Define a function (the It\^{o}-map) by 
\[
\Psi_{y_{0},f}:C_{1}\rightarrow W,\text{ }\gamma\mapsto y_{1} 
\]
where $y$ is the unique solution over $\left[ 0,1\right] $ of the
differential equation (\ref{ode}). Then

\begin{enumerate}
\item The function $\Psi_{y_{0},\alpha}$ is constant on every equivalence
class of $\sim_{\tau}$ and, for every $\gamma~$in $C_{1}$, $%
\Psi_{y_{0},\alpha}\left( \gamma\right) $ is given by the convergent series 
\begin{equation}
\Psi_{y_{0},\alpha}\left( \gamma\right)
=y_{0}+\sum_{k=1}^{\infty}f_{S_{k}\left( \gamma\right) }^{\left( k\right)
}I\left( y_{0}\right) .   \label{series}
\end{equation}
\item The It\^{o}-map is a well-defined function from $\mathcal{C}_{1}$ into $W.$ For every $r>0$, the restriction of this function to $%
B\left( r\right) $ is continuous with respect to the topology $\chi_{\text{pr}}$ on $\mathcal{C}_{1}.$
\end{enumerate}
\end{proposition}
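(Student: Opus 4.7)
The plan is first to derive the series expansion $(\ref{series})$ from the differential equation $(\ref{ode})$ via iterated Picard/Taylor expansion and the growth hypothesis $(\ref{derivative bound})$, and then to deduce item 2 from elementary properties of the product topology together with a dominated-convergence style estimate.

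For item 1, start from the integral form $y_t = y_0 + \int_0^t f_{d\gamma_s}(y_s)\,ds$ and substitute it repeatedly into itself. After $N$ substitutions, and after evaluating the identity map $I:W\to W$, one obtains a partial sum matching the first $N$ terms of $(\ref{series})$ plus a remainder expressible as an $(N+1)$-fold iterated Young integral of the $(N+1)$-th order differential operator $f^{(N+1)}I$ evaluated along the solution $y$. Combining the standard factorial decay $\left\vert\left\vert S_{k}(\gamma)\right\vert\right\vert_{\vee,k}\leq\left\vert\left\vert\gamma\right\vert\right\vert_{1}^{k}/k!$ (see for example \cite[Proposition 2.2]{SaintFlour}) with the uniform-in-$y$ operator bound $(\ref{derivative bound})$ controls this remainder by $(C\left\vert\left\vert\gamma\right\vert\right\vert_{1})^{N+1}/(N+1)!$, which tends to zero as $N\to\infty$. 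The same estimate shows that the series on the right-hand side of $(\ref{series})$ converges absolutely, so passing to the limit yields $(\ref{series})$. Because the right-hand side depends on $\gamma$ only through $S(\gamma)$, and signatures are constant on $\sim_\tau$-equivalence classes by Theorem \ref{HL}, $\Psi_{y_0,f}$ is constant on every $\sim_\tau$ equivalence class.

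For item 2, well-definedness on $\mathcal{C}_{1}$ is immediate from item 1. For continuity on $B(r)$, suppose $[\gamma_{n}]\to[\gamma]$ in $\chi_{\text{pr}}$ with every $[\gamma_{n}]\in B(r)$. By definition of $\chi_{\text{pr}}$ we have $S_{k}(\gamma_{n})\to S_{k}(\gamma)$ in $V^{\otimes k}$ for each $k$, and since $w\mapsto f^{(k)}_{w}I(y_{0})$ is linear, every summand in $(\ref{series})$ converges. For any $[\sigma]\in B(r)$ we have $\left\vert\left\vert\sigma^{*}\right\vert\right\vert_{1}\leq r$, hence $\left\vert\left\vert S_{k}(\sigma)\right\vert\right\vert_{\vee,k}\leq r^{k}/k!$ and therefore $\left\vert\left\vert f^{(k)}_{S_{k}(\sigma)}I(y_{0})\right\vert\right\vert\leq (Cr)^{k}/k!$. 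This summable bound is uniform over $B(r)$, so dominated convergence for series lets us interchange limit and summation in $(\ref{series})$, giving $\Psi_{y_{0},f}([\gamma_{n}])\to\Psi_{y_{0},f}([\gamma])$.

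The main technical subtlety is norm compatibility: the operator bound $(\ref{derivative bound})$ must be interpreted relative to the projective tensor norm on $V^{\otimes k}$, which is precisely the norm in which the factorial signature estimate is naturally stated, so that $\left\vert\left\vert f^{(k)}_{w}I(y)\right\vert\right\vert\leq C^{k}\left\vert\left\vert w\right\vert\right\vert_{\vee,k}$. Once this is fixed, the hardest step is the Picard/Taylor remainder estimate in item 1, which is a bookkeeping exercise standard in the rough-paths literature (cf.\ \cite{SaintFlour, FV}); the continuity in item 2 is then an almost immediate consequence of the pointwise convergence guaranteed by $\chi_{\text{pr}}$ together with the uniform tail bound coming from the constraint $[\gamma_{n}]\in B(r)$.
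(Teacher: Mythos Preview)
Your proposal is correct and follows essentially the same approach as the paper's proof: iterated expansion of the ODE to obtain a partial sum plus a remainder controlled by $(C\left\vert\left\vert\gamma\right\vert\right\vert_{1})^{N+1}/(N+1)!$ for item 1, and for item 2 a sequential argument combining termwise convergence in $\chi_{\text{pr}}$ with the uniform tail bound available on $B(r)$. The only cosmetic difference is that the paper phrases the continuity step as an explicit ``finite sum plus tail'' estimate rather than invoking dominated convergence for series, and your remark on interpreting the operator norm in $(\ref{derivative bound})$ relative to the projective tensor norm is a point the paper leaves implicit.
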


\begin{remark}
From (\ref{condense}) we can see that condition (\ref{derivative bound})
will hold if the derivatives $D^{k}f$ of $f$ satisfy 
\[
\sup_{k\in%
\mathbb{N}
}\sup_{y\in W}\left\vert\left\vert D^{k}f\left( y\right) \right\vert\right\vert\leq C. 
\]
\end{remark}

\begin{proof}
The signature is invariant on the tree-like
equivalence classes and hence so will be the function $\Psi_{y_{0},\alpha}$
once we prove (\ref{series}). To see this, we observe first by iterated use
of the change-of-variable formula that for any $N\geq1$ we have
\[
\Psi_{y_{0},\alpha}\left( \gamma\right) =\Psi_{y_{0},\alpha}^{N}\left(
\gamma\right) +\int_{0< t_{1}<\dots<
t_{N+1}<1}f_{d\gamma_{t_{1}}d\gamma_{t_{2}}\dots d\gamma_{t_{N}}d\gamma_{t_{N+1}}}^{\left(
N+1\right) }I\left( y_{t_{1}}\right) , 
\]
where 
\[
\Psi_{y_{0},\alpha}^{N}\left( \gamma\right)
:=y_{0}+\sum_{k=1}^{N}f_{S_{k}\left( \gamma\right) }^{\left( k\right)
}I\left( y_{0}\right). 
\]
Condition (\ref{derivative bound}) ensures 
\[
\left\vert \Psi_{y_{0},\alpha}\left( \gamma\right)
-\Psi_{y_{0},\alpha}^{N}\left( \gamma\right) \right\vert \leq\frac{%
C^{N+1}L_{\gamma}^{N+1}}{\left( N+1\right) !}, 
\]
so that indeed (\ref{series}) holds.

Abusing notation we still use $\Psi_{y_{0},\alpha}$ to denote the
well-defined function on $\mathcal{C}_{
1}$ defined by $%
\Psi_{y_{0},\alpha }:\left[ \gamma\right] \mapsto\Psi_{y_{0},\alpha}\left(
\gamma\right) .$ To prove continuity of $\Psi_{y_{0},\alpha}$ on $B\left(
r\right) $ we take a sequence $\left[ \gamma_{n}\right] \rightarrow\left[
\gamma\right] $ in $B\left( r\right) $ in the product topology, and then
note that for any $N\geq1$%
\[
\left\vert \Psi_{y_{0},\alpha}\left( \left[ \gamma\right] \right)
-\Psi_{y_{0},\alpha}\left( \left[ \gamma_{n}\right] \right) \right\vert
\leq\sum_{k=1}^{N}\left\vert f_{S_{k}\left( \gamma^{\ast}\right) }^{\left(
k\right) }I\left( y_{0}\right) -f_{S_{k}\left( \gamma_{n}^{\ast}\right)
}^{\left( k\right) }I\left( y_{0}\right) \right\vert +\frac{2C^{N+1}r^{N+1}}{%
\left( N+1\right) !}. 
\]
Letting \thinspace$n\rightarrow\infty$ and then $N\rightarrow\infty$ we
learn that $\Psi_{y_{0},\alpha}\left( \left[ \gamma_{n}\right] \right) $ $%
\rightarrow\Psi_{y_{0},\alpha}\left( \left[ \gamma\right] \right) $ as $%
n\rightarrow\infty.$
\end{proof}

\subsection{ Revisiting the expected signature model in regression
analysis}
The chief motivation of Theorem \ref{found} is to provide a theoretical justification for
the so-called expected signature model introduced by Levin, Lyons and Ni in \cite{LLN}. To
explore the relation between this model and the foundations developed above,
suppose that $\Gamma$ is a random variable taking values in the space
$\left(  \mathcal{S},\chi_{\text{pr}}\right)$. Let $Y$ be another random variable, defined on the same
probability space as $\Gamma$, which takes values in a finite dimensional
vector space $W$. Then the goal of regression analysis is to learn the
conditional expectation $\mathbb{E}\left[  \left.  Y\right\vert \Gamma\right]
$, where $Y$ is interpreted as the response of some system to the input
$\Gamma.$ Another way of saying this is that we want to approximate the
Borel-measurable function $f:\mathcal{S}\rightarrow W$ defined by
\begin{equation}
f\left(  g\right)  =\mathbb{E}\left[  \left.  Y\right\vert \Gamma=g\right]
.\label{cond exp}%
\end{equation}
Suppose the law of $\Gamma$, a Borel probability measure on $\mathcal{S},$ is denoted by $\mu.$ Then, as we will see in Corollary \ref{cor: lusin}, by a version of Lusin's Theorem \cite{bogachev}, the function $f$ in (\ref{cond exp}) is almost continuous in the sense
that for any $\delta>0$ there exists a compact set $K=K_{\delta}$ such that
$\mu\left(  \mathcal{S}\setminus K\right)  <\delta$ and such that
$f$ is continuous on $K.$ By Theorem \ref{found}, it
is then reasonable to adopt the model
\[
Y=L\left(  \Gamma\right)  +\epsilon,
\]
where $L$ is the restriction to $\mathcal{S}$ of a linear function from $T\left(
\left(  V\right)  \right)  $ to $W$ and $\epsilon$ is a $W$-valued random
variable satisfying $\mathbb{E}\left[  \left.  \epsilon\right\vert
\Gamma\right]  =0\,.$ This obviates the need to find an explicit compact set
and continuous function relating the independent and dependent variables. 

The typical case is where $\Gamma=S\left(\gamma\right)  $ for some
stochastic process $\gamma$ in $C_{1}$, so that the probability measure
$\mu=S_{\ast}\mu_{\gamma}$ is the push-forward of the law of $\gamma$ under
the signature map, and where $Y$ is the solution to an ordinary differential
equation driven by $\gamma.$ The following result describes conditions for the
well-definedness and measurability of the functions on $\mathcal{C}_{1}\rightarrow W$ which result from this construction.

\begin{proposition}\label{prop: meas}
Let $W$ be a finite-dimensional vector space. Suppose that $f:V\rightarrow
\mathcal{T}\left(  W\right)  $ is a linear function which satisfies the
following conditions:

\begin{enumerate}
\item For every $v$ in $V$, $f_{v}:W\rightarrow W$ is Lipschitz continuous.

\item For every $R>0$ there exists a finite positive $C=C\left(  R\right)  $
such that for every $k$ in $\mathbb{N}$ the following bound holds%
\[
\sup_{\left\vert y\right\vert \leq R}\left\vert \left\vert f^{\left(
k\right)  }I\left(  y\right) \right\vert \right\vert \leq C\left(  R\right)
^{k}.
\]

\end{enumerate}

Then the It\^{o} map  $\gamma\mapsto y_{1}$ derived from the ordinary
differential equation $dy=f_{d\gamma_{t}}\left(  y_{t}\right)  $ starting at
$y_{0}$ is invariant with respect to the tree-like equivalence relation on
$C_{1}$ and induces a well-defined Borel measurable function $\Psi_{y_{0},f}%
:\mathcal{C}_{1}\rightarrow W$, where $\mathcal{C}_1$ is equipped with any of the candidate topologies. Additionally, for $\chi_{\text{pr}}$ and $\chi_d$, there exists a Borel measurable function
$\bar{\Psi}_{y_{0},f}:\bar{\mathcal{C}_{1}}\rightarrow W$ which agrees with
$\Psi_{y_{0},f}$ on $\mathcal{C}_{1}$.
\end{proposition}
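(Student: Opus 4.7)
The plan is to localise the global-boundedness hypothesis of Proposition \ref{prop: ode}. Under condition (1) the ODE $dy_t = f_{d\gamma_t}(y_t)$ admits a unique $W$-valued solution on $[0,1]$ for every driver $\gamma \in C_1$ by Picard--Lindel\"of, and a Gr\"onwall estimate confines $y$ to a ball $\{|y| \leq R\}$ whose radius depends only on $|y_0|$, the Lipschitz constant of $f$, and the 1-variation $L_\gamma = \|\gamma\|_1$. On this ball condition (2) supplies $\|f^{(k)} I(y)\| \leq C(R)^k$, so reinstating the iterated change-of-variable expansion of Proposition \ref{prop: ode} with this $R$-dependent constant delivers the convergent representation
\[
\Psi_{y_0,f}(\gamma) = y_0 + \sum_{k=1}^{\infty} f^{(k)}_{S_k(\gamma)} I(y_0).
\]
Since the right-hand side depends on $\gamma$ only through $S(\gamma)$, the map $\Psi_{y_0,f}$ is tree-like invariant and descends to a well-defined function on $\mathcal{C}_1$.

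For Borel measurability, every $[\gamma] \in B(r)$ has tree-reduced length at most $r$, so the corresponding solutions stay in a common ball $\{|y| \leq R(r, |y_0|)\}$. The continuity argument of Proposition \ref{prop: ode} then applies verbatim with $C = C(R(r, |y_0|))$, showing $\Psi_{y_0,f}$ is $\chi_{\text{pr}}$-continuous on $B(r)$. Since each $B(r)$ is $\chi_{\text{pr}}$-compact by Proposition \ref{prop: B(r)} and hence Borel in the Hausdorff space $(\mathcal{C}_1, \chi_{\text{pr}})$, and $\mathcal{C}_1 = \bigcup_{r \in \mathbb{N}} B(r)$, the function $\Psi_{y_0,f}$ is Borel measurable in $\chi_{\text{pr}}$. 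The strict nesting $\chi_{\text{pr}} \subset \chi_\tau \subset \chi_d$ of Proposition \ref{topologies} gives $\mathcal{B}(\chi_{\text{pr}}) \subseteq \mathcal{B}(\chi_\tau) \subseteq \mathcal{B}(\chi_d)$, so measurability transfers to the finer topologies automatically.

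The two completions are handled by different arguments. For $\chi_{\text{pr}}$, the Corollary following Proposition \ref{prop: B(r)} writes $\mathcal{S} = \bigcup_{r} S(B(r))$ as an $F_\sigma$ in the Polish space $G^* = \bar{\mathcal{C}_{1}}$, so $\mathcal{C}_1$ is Borel there, and extending $\Psi_{y_0,f}$ by zero off $\mathcal{C}_1$ produces a Borel measurable $\bar{\Psi}_{y_0,f}$. For $\chi_d$, the set $\mathcal{C}_1$ has no obvious Borel description in its completion (the sets $B(r)$ are not $d$-complete, as the Cauchy sequence $(\rho_n)$ in the proof of Theorem \ref{thm: topologies}(3) shows), so instead we exploit the isometric embedding $\iota: (\mathcal{C}_1, d) \to (C_1, \|\cdot\|_1)$, $[\gamma] \mapsto \gamma^*$. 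Since $(C_1, \|\cdot\|_1)$ is complete, $\iota$ extends uniquely by uniform continuity to an isometric $\bar{\iota}: \bar{\mathcal{C}_{1}} \to C_1$, and composing with the continuous 1-variation solution map $\gamma \mapsto y_1^\gamma$ (continuous under (1) by standard ODE stability) produces a continuous, hence Borel, function $\bar{\Psi}_{y_0,f}: \bar{\mathcal{C}_{1}} \to W$ which restricts to $\Psi_{y_0,f}$ on $\mathcal{C}_1$ by tree-like invariance. The main technical concern is verifying the convergent series representation under only the local bound (2); once that is in place, the remaining steps follow with minimal additional work.
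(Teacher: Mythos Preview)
Your argument is correct and in places more explicit than the paper's. The invariance step is essentially the same: both localise Proposition~\ref{prop: ode} by bounding the solution trajectory inside a ball and applying condition~(2) there (the paper takes $R=|y_{0}|+L_{\gamma}$ without comment; your Gr\"onwall step makes this honest).

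The two genuine departures are in measurability and in the extension to the completions. For measurability, the paper observes that $\Psi_{y_{0},f}$ is the pointwise limit of the continuous maps $L_{N}\circ S$, $L_{N}(\mathbf{a})=y_{0}+\sum_{k\le N}f^{(k)}_{a_{k}}I(y_{0})$; this is a one-line route to Borel measurability in $\chi_{\text{pr}}$ (and hence in the finer topologies). Your route via continuity on each $B(r)$ and the $\sigma$-compact decomposition is slightly longer but yields the extra information that $\Psi_{y_{0},f}$ is $\chi_{\text{pr}}$-continuous on every $B(r)$ under the weaker local hypothesis~(2), strengthening Proposition~\ref{prop: ode}.

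For the extension, the paper invokes a general measurable-extension theorem of Shortt (any Borel map from a subspace with the trace $\sigma$-algebra into a Polish target extends measurably), treating $\chi_{\text{pr}}$ and $\chi_{d}$ uniformly. Your construction is more hands-on: for $\chi_{\text{pr}}$ you exploit that $\mathcal{C}_{1}$ is $F_{\sigma}$ in $G^{*}$ and extend by zero; for $\chi_{d}$ you pass through the isometric embedding $[\gamma]\mapsto\gamma^{*}$ into the complete space $(C_{1},\|\cdot\|_{1})$, extend by uniform continuity, and compose with the $1$-variation solution map. The latter buys you more than the paper claims---a \emph{continuous} extension $\bar{\Psi}_{y_{0},f}$ on $(\bar{\mathcal{C}_{1}},\chi_{d})$, not merely a measurable one---at the cost of invoking completeness of $(C_{1},\|\cdot\|_{1})$ and continuity of the It\^{o} map in $1$-variation, both standard but worth stating.
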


\begin{proof}
The ordinary differential equation has a unique solution for every $\gamma$ in
$C_{1}.$ A repetition of the same argument in the proof of Proposition \ref{prop: ode} leads
to the estimate
\[
y_{1}=y_{0}+\sum_{k=1}^{N}f_{S_{k}\left(  \gamma\right)  }^{\left(  k\right)
}I\left(  y_{0}\right)  +E_{N+1},~\text{where }\left\vert E_{N}\right\vert
\leq\frac{C\left(  \left\vert y_{0}\right\vert +L_\gamma
\right)  ^{N}L_\gamma^{N}}{N!},%
\]
which allows one to deduce invariance on the equivalence classes by taking the
limit $N\rightarrow\infty.$ To see that the resulting function $\Psi_{y_{0}%
,f}:\mathcal{C}_{1}\rightarrow W$  is  measurable with respect to
$\mathcal{B(C}_{1})$, the Borel sigma algebra of $\mathcal{C}_{1}  ,$ and $\mathcal{B}(W)$ we notice that it is the
pointwise limit of the functions $L_{N}\circ S$ where $L_{N}$ denotes the
(restriction of) the linear function%
\[
L_{N}:\left(  a_{0},a_{1},\dots\right)  \mapsto y_{0}+\sum_{k=1}^{N}f_{a_{k}%
}^{\left(  k\right)  }I\left(  y_{0}\right)  \in W
\]
to $\mathcal{S\,}.$ The existence of a measurable extension follows from the
fact that $(W,\mathcal{B}(W))$ is a Polish space; see \cite[Theorem 1]{shortt} noting that $\mathcal{B(C}_{1})=\left\{
A\cap\mathcal{C}_{1}:A\in\mathcal{B(\bar{C}}_{1})\right\}  .$
\end{proof}
A general version of Lusin's Theorem yields the following corollary for the product topology $\chi_{_{\text{pr}}}$.
\begin{corollary}\label{cor: lusin}
Consider the setup as in Proposition \ref{prop: meas}, and let $\mu$ be a Borel probability measure on $(\mathcal{C}_{1},\chi_{_{\text{pr}}})$. Then, for every $\delta>0$, there is a compact set $K\subset \mathcal{C}_1$ such that $\mu(\mathcal{C}_1\setminus K)<\delta$ and $\left.\Psi_{y_0,f}\right\vert_{K}$ is continuous.
\end{corollary}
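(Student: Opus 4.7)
The plan is to realise the corollary as a direct application of Lusin's theorem: the Borel measurability of $\Psi_{y_0,f}$ from Proposition \ref{prop: meas}, combined with the tightness of $\mu$ that follows from the $\sigma$-compactness of $(\mathcal{C}_1,\chi_{\text{pr}})$, is exactly what is required.

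First, I would verify that every Borel probability measure on $(\mathcal{C}_1,\chi_{\text{pr}})$ is tight. Since Proposition \ref{prop: B(r)} provides a nested exhaustion $\mathcal{C}_1=\bigcup_{r\in\mathbb{N}} B(r)$ by compact sets, continuity of measure from below yields $\mu(B(r))\uparrow 1$, so for any $\delta>0$ there exists $r_0=r_0(\delta)$ with $\mu(\mathcal{C}_1\setminus B(r_0))<\delta$. This is the Ulam-type conclusion already flagged in the discussion preceding the corollary. Combined with the metrisability of $(\mathcal{C}_1,\chi_{\text{pr}})$ from Theorem \ref{thm: topologies} and the Borel measurability of $\Psi_{y_0,f}:\mathcal{C}_1\to W$ into the second-countable space $W$, a general version of Lusin's theorem for tight Borel measures (see \cite{bogachev}) immediately produces the required compact $K$.

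A more constructive alternative would be to take $K=B(r_0)$ directly and verify continuity of $\Psi_{y_0,f}|_{B(r_0)}$ by hand, using the truncations $L_N(\mathbf{a}):=y_0+\sum_{k=1}^N f_{a_k}^{(k)}I(y_0)$ of the series for $\Psi_{y_0,f}$. Each $L_N\circ S$ depends only on $S^{(N)}$, and is therefore $\chi_{\text{pr}}$-continuous; the uniform tail bound
\begin{equation*}
\sup_{[\gamma]\in B(r_0)}\bigl|\Psi_{y_0,f}([\gamma])-L_N(S([\gamma]))\bigr|\leq \frac{C(|y_0|+r_0)^N r_0^N}{N!},
\end{equation*}
obtained from the remainder estimate in Proposition \ref{prop: meas} together with $L_\gamma\leq r_0$ on $B(r_0)$, shows that $L_N\circ S$ converges uniformly to $\Psi_{y_0,f}$ on $B(r_0)$, so the limit is continuous there. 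This self-contained route in fact gives the stronger conclusion that $\Psi_{y_0,f}$ is continuous on every $B(r)$, not merely on the single compact set produced by Lusin.

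Neither route presents a genuine obstacle: all the structural work has been done in the earlier sections. The only subtlety worth checking is that the version of Lusin's theorem invoked has hypotheses (Borel measure, tightness, metrisable domain, second-countable target) matching our non-Polish setting, rather than the more restrictive Radon-on-Polish formulation.
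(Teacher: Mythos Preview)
Your first route is essentially the paper's argument: $\sigma$-compactness of $(\mathcal{C}_1,\chi_{\text{pr}})$ from Proposition \ref{prop: B(r)} gives tightness of $\mu$, and then a general form of Lusin's theorem produces the compact $K$. The paper phrases the intermediate step slightly differently, passing from tightness plus separability to the Radon property via \cite[Proposition 7.2.2]{bogachev} and then invoking Lusin for Radon measures \cite[Theorem 7.1.13]{bogachev}, whereas you lean on metrisability; these are interchangeable in this setting.

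Your second route is not in the paper and is worth highlighting. Taking $K=B(r_0)$ and using the remainder estimate of Proposition \ref{prop: meas} (with the tree-reduced representative, so that $L_{\gamma^*}\le r_0$ and the constant $C(\cdot)$ is fixed at $C(|y_0|+r_0)$) gives uniform convergence of the continuous truncations $L_N\circ S$ to $\Psi_{y_0,f}$ on $B(r_0)$, hence continuity there. This upgrades the conclusion of Proposition \ref{prop: ode} to the weaker local hypotheses of Proposition \ref{prop: meas}, yields an explicit $K$, and avoids Lusin altogether. The trade-off is scope: the paper's Lusin argument works for \emph{any} Borel-measurable function on $(\mathcal{C}_1,\chi_{\text{pr}})$ (in particular the conditional expectation $g\mapsto\mathbb{E}[Y\mid\Gamma=g]$ discussed just before the corollary), while your constructive route is specific to the series structure of the It\^o map.
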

\begin{proof}
Since $\mathcal{C}_1$ is $\sigma-$compact with respect to $\chi_{_{\text{pr}}}$, $\mu$ is tight. Separability of $(\mathcal{C}_{1},\chi_{_{\text{pr}}})$ and \cite[Proposition 7.2.2]{bogachev} then imply $\mu$ is a Radon measure. Measurability of $\Psi_{y_0,f}$ and Lusin's Theorem for finite Radon measures \cite[Theorem 7.1.13]{bogachev} shows the existence of compact sets with the desired property.
\end{proof}
\section*{Acknowledgements}
The authors would like to thank Terry Lyons and Christian Litterer for helpful discussions and suggestions on an earlier draft of this paper.
\printbibliography[heading=bibintoc]

\end{document}